\def\BibTeX{{\rm B\kern-.05em{\sc i\kern-.025em b}\kern-.08em
    T\kern-.1667em\lower.7ex\hbox{E}\kern-.125emX}}
\newtheorem{theorem}{Theorem}[section]
\newtheorem{proposition}[theorem]{Proposition}
\newtheorem{lemma}[theorem]{Lemma}
\newtheorem{corollary}[theorem]{Corollary}
\newtheorem{definition}{Definition}[section]
\newtheorem{remark}{Remark}[section]
\newtheorem{example}{Example}[section]
\newcommand{\memo}[1]{}
\newcommand{\vs}[1]{\vspace{#1mm}}
\newcommand{\hs}[1]{\hspace{#1mm}}
\newcommand{\nt}{\notag}
\newcommand{\mcal}[1]{{\mathcal{#1}}}
\newcommand{\mbb}[1]{{\mathbb{#1}}}
\newcommand{\mfr}[1]{{\mathfrak{#1}}}
\newcommand{\mscr}[1]{{\mathscr{#1}}}
\newcommand{\pd}{\partial}
\newcommand{\vl}[1]{\text{\boldmath $#1$}}
\newenvironment{xyczero}
{\jot = 1mm \vs{-1}
\begin{array}{c}}{\end{array}
\vs{1} \jot = 0mm}
\def\Ker{\mathop{\mathrm{Ker}}\nolimits}
\def\Im{\mathop{\mathrm{Im}}\nolimits}
\def\Div{\mathop{\mathrm{Div}}\nolimits}
\def\Curl{\mathop{\mathrm{Curl}}\nolimits}
\def\Grad{\mathop{\mathrm{Grad}}\nolimits}
\newcommand{\Hodge}{\hs{0.2}\text{$\ast$}\hs{0.2}}
\def\VF{\mfr{X}}
\newcommand{\HK}{\VF_{HK}}
\newcommand{\HG}{\VF_{HG}}
\newcommand{\VG}{\VF_{G}}
\newcommand{\VK}{\VF_{K}}
\newcommand{\GV}{v}
\newcommand{\GVS}{v}
\newcommand{\GWS}{w}
\newcommand{\SE}{E}
\newcommand{\SD}{D}
\def\tangent{\vl{t}}
\def\normal{\vl{n}}
\newcommand{\ST}{T}
\newcommand{\SN}{N}
\newcommand{\HDR}{H_{DR}}
\newcommand{\HSD}{H_{SD}}
\newcommand{\HDRr}{H}
\newcommand{\HDRa}{H}
\newcommand{\HDRed}[2]{{\HDR^{#1}(#2, d)}}
\newcommand{\HDRde}[2]{{\HDR^{#1}(#2, \delta)}}
\newcommand{\Z}{M}
\def\rank{\mathop{\mathrm{rank}}\nolimits}		
\title{\LARGE \bf
Topological geometric extension of Stokes-Dirac structures \\ for global energy flows*
}
\author{Gou Nishida\thanks{This work was supported by JSPS KAKENHI Grant Number JP18K04213.}~\thanks{Gou Nishida is with Department of Electrical and Electronic Engineering, College of Engineering, Nihon University, 1 Nakagawara, Tokusada, Tamura, Koriyama, Fukushima, 963-8642, JAPAN. {\ttfamily \small g.nishida@ieee.org}}, and Bernhard Maschke\thanks{Bernhard Maschke is with Laboratoire d'Automatique et de G\'{e}nie des Proc\'{e}d\'{e}s, LAGEP UMR CNRS 5007, UFR Facult\'{e} des Sciences et Technologies, Universit\'{e} Claude Bernard Lyon 1, 43, bd du 11 Novembre 1918, F-69622 Villeurbanne cedex, FRANCE. {\ttfamily \small bernhard.maschke@univ-lyon1.fr}}%
}
\begin{document}

\maketitle

\begin{abstract}
This paper clarifies a global structure of Stokes-Dirac structures used for describing interconnected port-Hamiltonian systems defined on manifolds with non-trivial topology under consistent boundary condition.
\end{abstract}




\section{Introduction}\label{Sec1}

\textit{Port-Hamilton systems}~\cite{b_ASch1} have been developed as an extended Hamiltonian system, and it is one of most essential system representations for controlling complex physical systems.
Port-Hamilton systems are defined by particular variable pairs of an input and an output.
The pair is called \textit{port}, and the variables are called \textit{effort} and \textit{flow}.
The product of an effort and the corresponding flow has the physical dimension of power, i.e., the time derivative of energy.
Thus, the sum of the products of all pairs is equivalent to the time derivative of the total energy of a given system, i.e., \textit{Hamiltonian}.
Indeed, ports can be derived from derivatives of a Hamiltonian.
By using port-Hamiltonian systems, physical network systems interconnected through the ports can be described.
Then, an energy balance equation can be defined on the terminal of the port-interconnections by the port variables.
If the energy balance holds in any interaction with environments, this property can be used for stability analysis, because an energy of systems can be considered as a Lyapunov function.
Thus, various control methods using energy flows on such a network, e.g., passivity-based controls~\cite{b_ASch1, b_AMac1}, can be used.

Port-Hamilton systems have been extended for systems governed by partial differential equations, and it is called a \textit{distributed port-Hamilton system}~\cite{j_ASch1, b_AMac1}.
In the system, ports are defined on a boundary of a control system domain, and they are called \textit{boundary ports}.
Then, the balance equation can be augmented as that on the boundary.
As a remarkable property of distributed port-Hamilton systems, Stokes theorem can applied to the boundary energy balance, i.e., an internal energy variation can be transformed to a boundary energy variation.
Hence, a kind of boundary energy controls can be realized.
The boundary integrability can be formulated by a particular case of Dirac structures called \textit{the Stoke-Dirac structure}~\cite{j_ASch1, j_GNis1}, where the Dirac structure is a generalized concept of Poisson and Symplectic structures.

The port interconnection of distributed port-Hamiltonian systems determines a network of energy flows between systems domains through their boundaries.
The union of the domains with boundaries configures a particular shape; therefore, the energy flow can be considered as a vector field on a manifold if they are continuous.
Manifolds may have various topologies, and they can be classified and characterized by topological geometric concepts, homology and cohomology groups~\cite{b_SMor1, b_GSch1}.
The homology of manifolds is introduced from a triangulation of manifolds that is a decomposition consisting of fundamental figures with an integer dimension, called \textit{the simplicial complex}~\cite{b_SMor1}, and it means the number of closed loops that never divide a manifold into two disjoint subdomains in the two-dimensional case.
The cohomology of manifolds is a dual concept of the homology, and it is equivalent to the difference between two types of differential forms, i.e., closed and exact forms in \textit{the de Rham complex}~\cite{b_GSch1} that consists of space of differential forms and exterior derivatives.
Differential forms are used for defining of integrands over higher-dimensional surfaces, i.e., manifolds~\cite{b_SMor1}, and they independent of coordinates in multivariable calculus.
In~\cite{j_ASch2, j_PKot1, j_PKot2}, port-representations of discrete systems have been studied in terms of homology.
Cohomological approaches for port-representations were proposed in~\cite{c_GNis1, b_GNis1}.

The purpose of this paper is to clarify the relationship between the Stokes-Dirac structure and the topological geometry of the manifold on which the internal and boundary energy flows of distributed port-Hamiltonian systems defined.
The Stoke-Dirac structure is defined by differential forms, and a distributed port-Hamiltonian system with the Stokes-Dirac structure is actually defined on a domain of a manifold.
Therefore, differential forms used for the Stoke-Dirac structure must be affected by the shape of the manifold.
The original Stokes-Dirac structure has been defined on a domain with a trivial topology, i.e., it is contractible to a point.

This paper is organized in the following sections.
In the second section, the definition of manifolds with boundary, differential forms on the manifolds are introduced from a typical notation in topological geometry~\cite{b_SMor1, b_GSch1}.
The third section explains that, from these definitions, \textit{the Hodge-Morrey decomposition}~\cite{b_GSch1}, which is an extended \textit{Hodge-Kodaira decomposition}~\cite{b_SMor1} for closed manifolds, can be considered on manifolds with boundary.
Harmonic differential forms~\cite{b_SMor1, b_GSch1} must be considered on manifolds with non-trivial topology, and the topology of the manifolds can be characterized by the harmonic forms given by the Hodge-Morrey decomposition.
Moreover, the harmonic forms can be classified as tangent or orthogonal by Friedrichs decomposition~\cite{b_GSch1}.
As a result, the fourth section shows that an essential property of an extended Stokes-Dirac structure for defining distributed port-Hamiltonian systems on manifolds with non-trivial topology can be derived from our previous results~\cite{j_GNis1,b_GNis1}.


\section{Mathematical preliminary}\label{Sec2}

The mathematical notation of this paper follows the reference~\cite{b_GSch1}.
Some basic concepts are explained in~\cite{b_SMor1}.


\subsection{Manifold with boundary}

Let $M$ be a paracompact topological Hausdorff space, where $M$ is called paracompact if any open covering $\{U_a\}_{a \in A}$ of $M$ can be refined by locally finite covering, and a covering $U$ is called the refinement of a covering $V$ if any set in $U$ is included in some set in $V$.

A surjective homeomorphism $\varphi_a\colon U_a \to \mbb{R}^n_{\vl{u}_a}$ to an open subset in $\mbb{R}^n_{\vl{u}_a}$ for $a \in A$ is called a \emph{chart}, where we have defined a real half space $\mbb{R}^n_{\vl{u}} = \{ \vl{x} \in \mbb{R}^n \mid \langle \vl{x}, \vl{u} \rangle \geq 0 \}$ for a fixed vector $\vl{u} \neq 0$ in $\mbb{R}^n$.
Hence, $\mcal{A}_M = \{(U_a, \varphi_a)\}_{a \in A}$ becomes an \emph{atlas} on $M$.

Then, \emph{the boundary} of $M$ is defined by
\begin{align}
 \pd M &:= \left\{ p \in M \mid \exists \varphi_a \textrm{\ s.t.\ } \langle \varphi_a(p), \vl{u}_a \rangle = 0 \right. \nt \\
&\hs{40}\left. \textrm{\ for some\ } \vl{u}_a \right\},
\end{align}
where $\vl{u}_a$ may be different for each $a \in A$.


\begin{definition}\label{PDmfd}
 We call a manifold $M$ with boundary \emph{$\pd$-manifold} if 
1) $M$ is equipped with a smooth atlas $\mcal{A}_M$, 
2) $\mcal{A}_M$ is oriented, 
3) $M$ is complete as a metric space.
\end{definition}


\subsection{Differential forms on boundary}

A differential $k$-form $\omega \in \varOmega^k(M)$ is an anti-symmetric $k$-linear map 
\begin{align}
 \omega \colon &\Gamma(TM) \times \stackrel{k}{\cdots} \times \Gamma(TM) \to C^\infty(M);\, \nt \\
 &(X_1, \cdots, X_k) \mapsto \omega(X_1, \cdots, X_k).
\end{align}
%

In the same way of the decomposition of vector fields (see \textit{Section\;\ref{VFonB}}) on $\pd M$, 
we can define 
\begin{align}
 \tangent \omega (X_1, \cdots, X_k) &= \omega (X_1^\parallel, \cdots, X_k^\parallel) \nt \\
 &\quad \forall X_1, \cdots, X_k \in \Gamma(TM|_{\pd M}), \\
 \normal \omega &= \omega|_{\pd M} - \tangent \omega
\end{align}
for $k \geq 1$, and $\tangent \omega = \omega$ for $k = 0$.


\begin{proposition}\label{Prop21}
The tangential component $\tangent \omega$ is uniquely determined by the pull-back $\iota^\ast\colon \varOmega^{k}(M) \to \varOmega^{k}(\pd M)$ of the inclusion $\iota\colon \pd M \to M$.
Thus, we get the identification $\iota^\ast \omega = \iota^\ast \tangent \omega = \tangent \omega$.
\end{proposition}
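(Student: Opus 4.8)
The plan is to reduce everything to the pointwise splitting of $T_pM$ at a boundary point, and to track carefully the distinction between forms living on the restricted bundle $TM|_{\pd M}$ (namely $\tangent\omega$, $\normal\omega$ and $\omega|_{\pd M}$) and a genuine form on $\pd M$ (namely $\iota^\ast\omega$). First I would fix the metric supplied by Definition~\ref{PDmfd}~(3): at each $p \in \pd M$ it yields the orthogonal decomposition $T_pM = T_p(\pd M) \oplus N_p$ into the tangent space of the boundary and the one-dimensional normal line $N_p$, so that every $X \in T_pM$ splits as $X = X^\parallel + X^\perp$ with $X^\parallel \in T_p(\pd M)$ and $X^\perp := X - X^\parallel \in N_p$; this is precisely the vector-field decomposition of \textit{Section~\ref{VFonB}}. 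The differential $d\iota$ of the inclusion at $p$ then identifies $T_p(\pd M)$ with the tangential subspace $\{ X \in T_pM \mid X^\perp = 0 \}$, so that $(d\iota\,Y)^\parallel = d\iota\,Y$ for every $Y \in T_p(\pd M)$.

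Next I would compute the pullback directly. For $Y_1,\dots,Y_k \in T_p(\pd M)$ the images $d\iota\,Y_i$ are purely tangential, so the defining formula for $\tangent\omega$ gives
\begin{align*}
 (\iota^\ast\omega)(Y_1,\dots,Y_k)
 &= \omega(d\iota\,Y_1,\dots,d\iota\,Y_k) \\
 &= \omega\bigl((d\iota\,Y_1)^\parallel,\dots,(d\iota\,Y_k)^\parallel\bigr) \\
 &= (\tangent\omega)(d\iota\,Y_1,\dots,d\iota\,Y_k).
\end{align*}
Reading the last line through the identification $d\iota$ yields $\iota^\ast\omega = \iota^\ast\tangent\omega = \tangent\omega$ on $\pd M$. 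Equivalently, applying the same computation to the defining relation $\normal\omega = \omega|_{\pd M} - \tangent\omega$ shows $\iota^\ast\normal\omega = 0$, since on purely tangential arguments $\omega|_{\pd M}$ and $\tangent\omega$ coincide; thus the normal part carries no pulled-back information.

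Finally I would settle uniqueness. By its very definition $(\tangent\omega)(X_1,\dots,X_k) = \omega(X_1^\parallel,\dots,X_k^\parallel)$ depends only on the tangential projections of its arguments, so $\tangent\omega$ factors through $X \mapsto X^\parallel$ and in particular annihilates any argument lying in $N_p$. Among all $k$-forms on $TM|_{\pd M}$ that both agree with $\iota^\ast\omega$ on $T(\pd M)$ and vanish on any normal argument, $\tangent\omega$ is therefore the unique one: multilinearity and the splitting $T_pM = T_p(\pd M)\oplus N_p$ expand any $k$-tuple of arguments, all terms containing a normal entry drop out, and only the all-tangential term survives. Hence $\iota^\ast$ restricts to a bijection from the space of such tangential forms onto $\varOmega^k(\pd M)$, and $\tangent\omega$ is uniquely determined by $\iota^\ast\omega$, as asserted.

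The only real obstacle here is bookkeeping rather than analysis: one must consistently distinguish the forms on the restricted bundle $TM|_{\pd M}$ from the pulled-back form on $\pd M$ and identify them correctly through $d\iota$. Once the splitting and the action of $d\iota$ on tangential vectors are in place, every equality is an immediate unwinding of the definitions.
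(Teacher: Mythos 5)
Your proof is correct. The paper itself states this proposition without any proof, importing it from the reference \cite{b_GSch1}, so there is no in-paper argument to compare against; your route --- the pointwise orthogonal splitting $T_pM=T_p(\pd M)\oplus N_p$, the observation that $d\iota\,Y$ is purely tangential so that $\iota^\ast\omega=\iota^\ast\tangent\omega$, and the recovery of $\tangent\omega$ from $\iota^\ast\omega$ by multilinearity because $\tangent\omega$ annihilates every normal argument --- is precisely the standard argument underlying the cited result, and your supplementary remark that $\iota^\ast\normal\omega=0$ is consistent with it. One bookkeeping correction: the metric splitting $X=X^{\parallel}+X^{\perp}$ comes from the Riemannian structure and collar construction of \textit{Section~\ref{VFonB}}, not from condition 3) of \textit{Definition~\ref{PDmfd}}, which only asserts completeness of $M$ as a metric space.
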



\begin{definition}\label{Def22}
 Let $M$ be $\pd$-manifold. We define the exterior product $\wedge\colon \varOmega^i(M) \times \varOmega^j(M) \to \varOmega^{i+j}(M)$, the Hodge star operator $\Hodge\colon \varOmega^i(M) \to \varOmega^{n-i}(M)$, the exterior derivative $d\colon \varOmega^i(M) \to \varOmega^{i+1}(M)$, and the co-differential operator $\delta\colon \varOmega^i(M) \to \varOmega^{i-1}(M);\ \omega \mapsto (-1)^{n i + n + 1}\Hodge d (\Hodge \omega)$ that is adjoint to $d$ with respect to $L^2$-metric $\left<\left< \omega, \eta \right>\right> = \int_M \omega \wedge \Hodge \eta$.
\end{definition}

\begin{proposition}\label{Prop1}
 Let $M$ be $\pd$-manifold, and let $\mcal{F}$ be a normal frame on $U \subset M$.
Then, the normal and tangential components of $\omega \in \varOmega^k(M)$ are adjoint to each other in the sense of Hodge operator $\Hodge\colon \varOmega^k(M) \to \varOmega^{n-k}(M)$, i.e., 
$\Hodge (\normal \omega) = \tangent (\Hodge \omega)$, and $\Hodge (\tangent \omega) = \normal (\Hodge \omega)$.
Moreover, $\tangent (d \omega) = d (\tangent \omega)$, and $\normal (\delta \omega) = \delta (\normal \omega)$ hold. 
\end{proposition}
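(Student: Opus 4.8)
The plan is to reduce each of the four identities to a computation in a \emph{normal frame}, in which the tangential and normal components of a form become, respectively, the summand that omits and the summand that contains the normal covector.

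First I would fix an orthonormal frame $\mcal{F} = \{e_1, \dots, e_{n-1}, e_n\}$ on $U$ with $e_n$ the unit outward normal and $e_1, \dots, e_{n-1}$ tangent to $\pd M$, and let $\{e^1, \dots, e^n\}$ be the dual coframe. Writing $\omega \in \varOmega^k(M)$ along $\pd M$ as $\omega = \alpha + e^n \wedge \beta$ with $\alpha, \beta$ assembled only from $e^1, \dots, e^{n-1}$, the defining formulas for $\tangent$ and $\normal$ together with $e^n(X^\parallel) = 0$ give $\tangent \omega = \alpha$ and $\normal \omega = e^n \wedge \beta$; that is, $\tangent \omega$ is the $e^n$-free part and $\normal \omega$ the $e^n$-containing part.

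The two Hodge identities are then purely algebraic, since $\Hodge$ acts pointwise. On an orthonormal coframe $\Hodge(e^{i_1} \wedge \cdots \wedge e^{i_k})$ is, up to sign, the complementary monomial, and the index $n$ belongs to a multi-index exactly when it is absent from its complement; hence $\Hodge$ carries $e^n$-containing monomials to $e^n$-free ones and conversely. Explicitly this gives $\Hodge(e^n \wedge \beta) = \pm \ast_{\pd M}\beta$ and $\Hodge\, \alpha = \pm\, e^n \wedge \ast_{\pd M}\alpha$, where $\ast_{\pd M}$ is the Hodge operator of the metric induced on $\pd M$. Reading off the $e^n$-free and $e^n$-containing parts of $\Hodge \omega$ immediately yields $\Hodge(\normal \omega) = \tangent(\Hodge \omega)$ and $\Hodge(\tangent \omega) = \normal(\Hodge \omega)$.

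For $\tangent(d\omega) = d(\tangent \omega)$ I would avoid the frame and instead invoke the naturality of $d$ under pull-back together with \textit{Proposition\;\ref{Prop21}}: since $\tangent \omega = \iota^\ast \omega$ and $\iota^\ast$ commutes with $d$, we get $\tangent(d\omega) = \iota^\ast(d\omega) = d(\iota^\ast \omega) = d(\tangent \omega)$, the last $d$ being the exterior derivative intrinsic to $\pd M$. Finally, $\normal(\delta \omega) = \delta(\normal \omega)$ is the Hodge dual of this: writing $\delta = (-1)^{n k + n + 1}\Hodge d \Hodge$ and applying in succession the identity $\Hodge(\tangent\,\cdot) = \normal(\Hodge\,\cdot)$ (to pull $\normal$ through the outer $\Hodge$), the identity $\tangent d = d\, \tangent$ just proved, and $\Hodge(\normal\,\cdot) = \tangent(\Hodge\,\cdot)$ (to rewrite the inner $\Hodge$), the composite collapses to $\Hodge d \Hodge$ acting on $\normal \omega$, which is $\delta(\normal \omega)$ for the induced codifferential on $\pd M$. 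I expect the one genuinely delicate point to be the sign bookkeeping in this last step: one must verify that the degree-dependent sign of $\delta$ on the $n$-manifold $M$ matches that of the codifferential on the $(n-1)$-manifold $\pd M$, and this is exactly where the consistency between $\Hodge$ and $\ast_{\pd M}$ expressed by the splitting formulas above is needed.
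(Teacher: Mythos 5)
Your proof is correct, and it is essentially the standard argument: the paper itself states \textit{Proposition\;\ref{Prop1}} without proof, quoting it from the cited reference \cite{b_GSch1} (Schwarz, \emph{Hodge Decomposition}, Sect.~1.2), and your route --- the normal-frame splitting $\omega = \alpha + e^n \wedge \beta$, the observation that $\Hodge$ exchanges $e^n$-free and $e^n$-containing monomials, naturality of the pull-back for $\tangent d = d \tangent$, and Hodge duality for the codifferential identity --- is exactly the proof given there.

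One clarification on the step you flag as delicate: there is no sign matching between $M$ and $\pd M$ to be done, because the $\delta$ on \emph{both} sides of $\normal (\delta \omega) = \delta (\normal \omega)$ is the codifferential of the $n$-manifold $M$ from \textit{Definition\;\ref{Def22}}, not an intrinsic operator of the $(n-1)$-manifold $\pd M$. Indeed, the intrinsic codifferential of $\pd M$ could not even be applied to $\normal \omega$, since $\normal \omega$ contains the conormal factor $e^n$ and hence is not a form on $\pd M$. Because $\normal \omega$ has the same degree $k$ as $\omega$, the same factor $(-1)^{nk+n+1}$ appears on both sides, and your chain $\normal(\delta\omega) = \pm \Hodge \tangent (d \Hodge \omega) = \pm \Hodge d\, \tangent (\Hodge \omega) = \pm \Hodge d \Hodge (\normal \omega) = \delta(\normal \omega)$ closes with no further bookkeeping; the only genuine interpretive point is that the middle $d$ acts on the tangential form $\tangent(\Hodge\omega)$ through the identification of \textit{Proposition\;\ref{Prop21}}, which you have already invoked for $\tangent d = d\tangent$.
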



\begin{definition}
 The Sobolev spaces $W^{0,p}\varOmega^k(M)$ and $W^{s,2}\varOmega^k(M)$ (see \textit{Section\;\ref{Sobolev}}) are denoted by $L^{p}\varOmega^k(M)$ and $H^{s}\varOmega^k(M)$, respectively.
\end{definition}


\section{Important known results for decompositions of differential forms}\label{Sec3}

Some useful results on the topological geometry of manifolds with boundary are introduced from the reference~\cite{b_GSch1}.


\subsection{Hodge decomposition on manifolds with boundary}

For differential forms on manifolds with boundary, the following well-known relation holds.


\begin{theorem}[Stokes theorem]\label{Stokes}
For all $\omega \in W^{1,1} \varOmega^{n-1}(M)$ on a $\pd$-manifold $M$ with compact boundary $\pd M$, 
 \begin{align}
  \int_M d\omega = \int_{\pd M} \iota^\ast \omega.
 \end{align}
\end{theorem}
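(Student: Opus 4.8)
The plan is to reduce the global identity to the classical half-space computation by means of a partition of unity, and then to recover the stated Sobolev regularity by a density argument. First I would invoke the paracompactness built into Definition~\ref{PDmfd} to choose a locally finite atlas $\{(U_a,\varphi_a)\}$ together with a subordinate smooth partition of unity $\{\rho_a\}$, so that $\sum_a \rho_a = 1$ on $M$ and each $\rho_a\omega$ is supported in a single chart. Since $d$ is linear and $\sum_a d\rho_a = 0$, we have $d\omega = \sum_a d(\rho_a\omega)$, and likewise $\iota^*\omega = \sum_a \iota^*(\rho_a\omega)$; because $\partial M$ is compact, only finitely many terms meet the boundary, so all sums are finite and the global identity follows once it is established for each compactly supported piece.

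Second, I would establish the chart-level statement on the half-space $\mbb{R}^n_{\vl{u}_a}$. For a piece supported in the interior (its support missing $\partial M$), the fundamental theorem of calculus applied coordinatewise gives $\int d(\rho_a\omega)=0$, matching the vanishing pull-back $\iota^*(\rho_a\omega)=0$. For a piece whose support meets the boundary, integrating the single surviving derivative across the half-space leaves exactly one boundary term, which by Proposition~\ref{Prop21} is the integral of the tangential component, i.e. $\int_{\partial M}\iota^*(\rho_a\omega)$; here the orientation clause of Definition~\ref{PDmfd} fixes the sign conventions so that the induced boundary orientation is consistent across overlapping charts.

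Finally, I would upgrade from smooth forms to the class $W^{1,1}\varOmega^{n-1}(M)$. Smooth forms that are smooth up to the boundary are dense in $W^{1,1}\varOmega^{n-1}(M)$, and the trace map $\iota^*\colon W^{1,1}\varOmega^{n-1}(M)\to L^1\varOmega^{n-1}(\partial M)$ is continuous on the compact boundary, so $\iota^*\omega$ is well defined. Choosing smooth $\omega_j \to \omega$ in $W^{1,1}$, the left-hand side converges because $d\omega_j \to d\omega$ in $L^1$, while the right-hand side converges by continuity of the trace; passing to the limit in the smooth identity yields the claim.

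I expect the main obstacle to be this last step rather than the classical computation: one must verify that the trace onto $\partial M$ exists and is continuous at the borderline Sobolev exponent $p=1$, which is delicate, and that smooth forms are genuinely dense up to the boundary, so that both sides of the identity pass to the limit simultaneously. Compactness of $\partial M$ and completeness of $M$ as a metric space are precisely what make the trace estimate and the density argument global rather than merely local.
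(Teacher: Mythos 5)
The paper itself contains no proof of Theorem~\ref{Stokes}: it sits in Section~\ref{Sec3} (``Important known results \ldots''), where it is quoted verbatim from the reference~\cite{b_GSch1}, so there is no internal argument to compare yours against. Your proposal is the standard proof of precisely this $W^{1,1}$ version of Stokes' theorem, and its three-step architecture --- partition of unity, half-space computation, density plus trace continuity --- is the right one and is essentially sound.

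Two points need tightening. First, your claim that ``all sums are finite'' is not justified where you make it: compactness of $\pd M$ bounds only the number of charts meeting the boundary, whereas the hypothesis allows $M$ itself to be non-compact, so infinitely many interior pieces $\rho_a\omega$ can be nonzero and the interchange of the sum with $\int_M$ is not free (the cancellation $\sum_a d\rho_a = 0$ holds pointwise, but absolute summability of $\sum_a \int_M |d\rho_a \wedge \omega|$ is not automatic). The clean fix is to reverse your order of steps: perform the density reduction first, then run the partition-of-unity argument on a compactly supported smooth representative, for which local finiteness of the cover genuinely makes every sum finite. Second, with the paper's own definition of Sobolev spaces (Appendix, Section~\ref{Sobolev}), $W^{1,1}\varOmega^{n-1}(M)$ is \emph{by definition} the completion of the space $\Gamma_c(\varLambda^{n-1}(M))$ of compactly supported smooth forms in the $W^{1,1}$-norm, so the density you worry about at the end requires no approximation theorem at all --- it is built into the function space. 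The genuinely nontrivial analytic ingredient is then exactly the one you isolate: boundedness of the trace $\iota^\ast\colon W^{1,1}\varOmega^{n-1}(M)\to L^1\varOmega^{n-1}(\pd M)$ at the borderline exponent $p=1$ (Gagliardo's trace theorem, applied on a collar of the compact boundary). Once that is in hand, $\omega \mapsto \int_M d\omega$ is bounded by the $W^{1,1}$-norm and $\omega \mapsto \int_{\pd M}\iota^\ast\omega$ is bounded by the trace estimate, so both sides of the smooth identity pass to the limit simultaneously, as you claim.
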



\begin{theorem}[Green formula]\label{Green}
 Let us consider $\omega \in W^{1,p}\varOmega^{k-1}(M)$, and $\eta \in W^{1,q} \varOmega^{k}(M)$ on a $\pd$-manifold $M$, where $1/p + 1/q = 1$.
 Then,
 \begin{align}
  \int_M d\omega \wedge \Hodge \eta =  \int_M \omega \wedge \Hodge \delta \eta + \int_{\pd M} \tangent \omega \wedge \Hodge \normal \eta.
 \end{align}
\end{theorem}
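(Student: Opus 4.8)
The plan is to reduce the identity to Stokes' theorem (Theorem~\ref{Stokes}) applied to the single $(n-1)$-form $\alpha := \omega \wedge \Hodge \eta$, combined with the definition of $\delta$ and the boundary identities of Propositions~\ref{Prop21} and~\ref{Prop1}. First I would check the regularity that makes this legitimate: since $\omega \in W^{1,p}\varOmega^{k-1}(M)$ and $\Hodge \eta \in W^{1,q}\varOmega^{n-k}(M)$ with $1/p + 1/q = 1$, a H\"older estimate applied to the product and to its first derivatives shows $\alpha \in W^{1,1}\varOmega^{n-1}(M)$, which is exactly the hypothesis under which Stokes' theorem is stated here. Note that $\Hodge$ is a pointwise metric-algebraic operator, so it maps $W^{1,q}\varOmega^{k}(M)$ into $W^{1,q}\varOmega^{n-k}(M)$ and the degree count $(k-1)+(n-k)=n-1$ is correct.

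Next, on smooth forms I would expand $d\alpha$ by the graded Leibniz rule, $d(\omega \wedge \Hodge \eta) = d\omega \wedge \Hodge \eta + (-1)^{k-1}\,\omega \wedge d(\Hodge \eta)$, and integrate over $M$. Stokes' theorem converts the left-hand side into the boundary integral $\int_{\pd M} \iota^\ast(\omega \wedge \Hodge \eta)$. To recognise the interior term I would rewrite $d(\Hodge \eta)$ through the definition $\delta \eta = (-1)^{nk+n+1}\Hodge d \Hodge \eta$ together with the involution $\Hodge \Hodge = (-1)^{j(n-j)}$ on $j$-forms; applying the latter to $d\Hodge \eta \in \varOmega^{n-k+1}(M)$ and collapsing the accumulated exponent $nk+n+1+(n-k+1)(k-1) \equiv k \pmod 2$ yields the clean relation $(-1)^{k-1}\,\omega \wedge d(\Hodge \eta) = -\,\omega \wedge \Hodge \delta \eta$. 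This is precisely the sign for which $\delta$ is the formal $L^2$-adjoint of $d$, so the interior contribution reproduces $\int_M \omega \wedge \Hodge \delta \eta$.

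For the boundary term I would use that pull-back is an algebra homomorphism, $\iota^\ast(\omega \wedge \Hodge \eta) = \iota^\ast \omega \wedge \iota^\ast(\Hodge \eta)$; then Proposition~\ref{Prop21} identifies $\iota^\ast \omega = \tangent \omega$ and $\iota^\ast(\Hodge \eta) = \tangent(\Hodge \eta)$, and finally Proposition~\ref{Prop1} gives $\tangent(\Hodge \eta) = \Hodge(\normal \eta)$. Hence $\int_{\pd M}\iota^\ast(\omega \wedge \Hodge \eta) = \int_{\pd M}\tangent \omega \wedge \Hodge \normal \eta$, matching the stated boundary integral exactly. Assembling the three contributions proves the formula for smooth $\omega$ and $\eta$.

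The last step is to remove the smoothness assumption by density: smooth forms are dense in $W^{1,p}\varOmega^{k-1}(M)$ and in $W^{1,q}\varOmega^{k}(M)$, and every term of the identity is continuous in the relevant Sobolev norms via the same H\"older bound used in the regularity check, so the equality passes to the limit under the full hypotheses. I expect the genuine obstacle to be this analytic step rather than the algebra, which is merely sign bookkeeping anchored in the definition of $\delta$. Concretely, one must confirm that $\alpha$ really lands in $W^{1,1}\varOmega^{n-1}(M)$ so that Theorem~\ref{Stokes} applies, and that the boundary traces $\tangent \omega$ and $\Hodge \normal \eta$ appearing in $\int_{\pd M}$ are well defined and jointly integrable for Sobolev forms; the conjugate-exponent condition $1/p+1/q=1$ together with a trace estimate on $\pd M$ is exactly what controls this, and verifying it carefully is where the real work lies.
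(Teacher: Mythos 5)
Your proof is correct and is essentially the argument of the paper's cited source \cite{b_GSch1} — the paper itself states this Green formula as a known result without proof — namely: apply Theorem~\ref{Stokes} to $\omega \wedge \Hodge \eta$, expand by the Leibniz rule, absorb the signs using $\delta = (-1)^{nk+n+1}\Hodge d \Hodge$ and $\Hodge\Hodge = (-1)^{j(n-j)}$, and identify the boundary integrand via $\iota^\ast$ and $\tangent(\Hodge \eta) = \Hodge \normal \eta$ from Propositions~\ref{Prop21} and~\ref{Prop1}. The only caveat is that Theorem~\ref{Stokes} as stated requires $\pd M$ compact while the Green formula statement omits this hypothesis, so your density/trace step implicitly needs compactness (of $\pd M$ or of $M$) — a looseness inherited from the paper, not introduced by you.
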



\begin{definition}\label{DNfields}
For the space of $\omega \in H^1 \varOmega^k(M)$ on a $\pd$-manifold $M$, 
\emph{the space of harmonic fields} in $H^1 \varOmega^k(M)$ is defined by
 \begin{align}
  \mcal{H}^k(M) = \left\{ \lambda \in H^1 \varOmega^k(M) \mid d\lambda = 0, \delta \lambda = 0 \right\}.
 \end{align}
The spaces of $\omega$ that vanish tangential or normal components are defined by
 \begin{align}
  H^1 \varOmega^k_\ST(M) &= \left\{ \omega \in H^1 \varOmega^k(M) \mid \tangent \omega = 0 \right\}, \\
  H^1 \varOmega^k_\SN(M) &= \left\{ \omega \in H^1 \varOmega^k(M) \mid \normal \omega = 0 \right\}.
 \end{align}
 Then, the subspaces
 \begin{align}
  \mcal{H}^k_\ST(M) &= H^1 \varOmega^k_\ST(M) \cap \mcal{H}^k(M), \\
  \mcal{H}^k_\SN(M) &= H^1 \varOmega^k_\SN(M) \cap \mcal{H}^k(M).
 \end{align}
 are called \emph{Dirichlet and  Neumann fields}, respectively.
 If $\pd M = \emptyset$, $H^1 \varOmega^k_\ST(M) = H^1 \varOmega^k_\SN(M) = H^1 \varOmega^k(M)$.
\end{definition}


\begin{remark}
 By the Hodge duality in \textit{Proposition\;\ref{Prop1}}, $\mcal{H}^k_\ST(M) \cong \mcal{H}^{n-k}_\SN(M)$ holds.
\end{remark}


\begin{definition}
 Let $M$ be a $\pd$-manifold. The subspaces of exact and co-exact $k$-forms with vanishing tangential and normal component in $L^2 \varOmega^k(M)$ are defined by
 \begin{align}
  \mcal{E}^k(M) &= \left\{ d \alpha \mid \alpha \in H^1 \varOmega^{k-1}_\ST(M) \right\}, \\ 
  \mcal{C}^k(M) &= \left\{ \delta \beta \mid \beta \in H^1 \varOmega^{k+1}_\SN(M) \right\}, 
 \end{align}
 where $\mcal{E}^0(M) = \{ 0 \}$ and $\mcal{C}^n(M) = \{ 0 \}$.
\end{definition}


\begin{theorem}[Hodge-Morrey decomposition]\label{HMDcomp}
 The Hilbert space $L^2 \varOmega^k(M)$ of square integrable $k$-forms on a compact $\pd$-manifold $M$ can be split into the $L^2$-orthogonal direct sum 
 \begin{align}
  L^2 \varOmega^k(M) = \mcal{E}^k(M) \varoplus \mcal{C}^k(M) \varoplus L^2 \mcal{H}^k(M),
 \end{align}
 where $L^2 \mcal{H}^k(M)$ is the $L^2$-closure of the space of harmonic fields $\mcal{H}^k(M)$.
\memo{
 For {\color{red}$W^{s,p}\varOmega^k(M)$}, where $s \in \mbb{N}$ and $p \geq 2$, 
the following corresponding $L^2$-orthogonal decomposition holds:
 \begin{align}
  &W^{s,p}\varOmega^k(M) = \nt \\
  &\quad W^{s,p}\mcal{E}^k(M) \varoplus W^{s,p}\mcal{C}^k(M) \varoplus W^{s,p}\mcal{H}^k(M).
 \end{align}
} 
\end{theorem}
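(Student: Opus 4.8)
The plan is to prove the theorem in two stages: first that the three summands are mutually $L^2$-orthogonal, which is elementary and rests only on Green's formula (Theorem~\ref{Green}); and second that their span is all of $L^2\varOmega^k(M)$, which is the genuinely analytic part and requires elliptic regularity for the Hodge Laplacian on a $\pd$-manifold. Throughout I write $\left<\left<\omega,\eta\right>\right> = \int_M \omega \wedge \Hodge \eta$ for the $L^2$-inner product of Definition~\ref{Def22}.

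For orthogonality, the point is that the boundary conditions built into the definitions of $\mcal{E}^k(M)$ and $\mcal{C}^k(M)$ are precisely engineered to annihilate the boundary integral in Green's formula. To see $\mcal{E}^k(M) \perp \mcal{C}^k(M)$, take $d\alpha$ with $\alpha \in H^1\varOmega^{k-1}_\ST(M)$ and $\delta\beta$ with $\beta \in H^1\varOmega^{k+1}_\SN(M)$; applying Theorem~\ref{Green} with $\omega=\alpha$ and $\eta=\delta\beta$ gives
\begin{align}
\left<\left< d\alpha, \delta\beta \right>\right> = \left<\left< \alpha, \delta\delta\beta \right>\right> + \int_{\pd M} \tangent\alpha \wedge \Hodge \normal(\delta\beta) = 0,
\end{align}
since $\delta\delta = 0$ and $\tangent\alpha = 0$. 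The same computation with $\eta$ a harmonic field $\lambda$ (so $\delta\lambda = 0$) yields $\mcal{E}^k(M)\perp\mcal{H}^k(M)$, and the dual computation — or, equivalently, the Hodge duality $\Hodge(\normal\omega) = \tangent(\Hodge\omega)$ of Proposition~\ref{Prop1}, which interchanges $\mcal{E}^k \leftrightarrow \mcal{C}^{n-k}$ and maps harmonic fields to harmonic fields — gives $\mcal{C}^k(M)\perp\mcal{H}^k(M)$. Passing to $L^2$-closures preserves orthogonality, so the sum is $L^2$-orthogonal.

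For completeness it suffices to show that any $\omega \in L^2\varOmega^k(M)$ orthogonal to all three summands vanishes. Orthogonality to $\mcal{E}^k(M)$ means $\left<\left<\omega, d\alpha\right>\right> = 0$ for every $\alpha \in H^1\varOmega^{k-1}_\ST(M)$, i.e. $\omega$ is weakly co-closed ($\delta\omega = 0$); orthogonality to $\mcal{C}^k(M)$ means $\left<\left<\omega, \delta\beta\right>\right> = 0$ for every $\beta \in H^1\varOmega^{k+1}_\SN(M)$, i.e. $\omega$ is weakly closed ($d\omega = 0$). Thus $\omega$ is a weak harmonic field. The crux is then elliptic regularity: one establishes the Gaffney--Friedrichs inequality, controlling $\|\omega\|_{H^1}$ by $\|d\omega\|_{L^2} + \|\delta\omega\|_{L^2} + \|\omega\|_{L^2}$ on forms satisfying either $\tangent\omega = 0$ or $\normal\omega = 0$, and combines it with the Rellich compactness of $H^1 \hookrightarrow L^2$. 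This promotes the weak harmonic field $\omega$ to a genuine element of $\mcal{H}^k(M) \subset L^2\mcal{H}^k(M)$; since $\omega$ was also assumed orthogonal to $L^2\mcal{H}^k(M)$, we conclude $\omega = 0$.

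The main obstacle is exactly this last analytic step. Everything up to the characterization of the orthogonal complement as weak harmonic fields is a formal consequence of Green's formula, but proving that weak harmonic fields are closed in $L^2$ and coincide with the (finite-dimensional) space of genuine harmonic fields — equivalently, that $d$ and $\delta$ have closed range under the stated boundary conditions, so that $\mcal{E}^k(M)$ and $\mcal{C}^k(M)$ are closed subspaces — demands the full elliptic theory of the Hodge Laplacian $\Delta = d\delta + \delta d$ under the appropriate (Dirichlet/Neumann) boundary conditions. This rests on the Gaffney inequality together with solvability of the associated boundary value problem $\Delta\eta = \omega$ on the compact $\pd$-manifold $M$, which is where the compactness of $M$ and the metric completeness in Definition~\ref{PDmfd} become essential.
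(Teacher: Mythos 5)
The paper never proves this theorem: it is imported verbatim as a known result from \cite{b_GSch1}, where the proof occupies a large part of that book's Chapter~2, so your proposal has to be judged against that standard argument. Your first stage reproduces it correctly: mutual orthogonality of $\mcal{E}^k(M)$, $\mcal{C}^k(M)$ and the harmonic fields follows from Green's formula (Theorem~\ref{Green}) exactly as you say, with the Hodge duality of Proposition~\ref{Prop1} handling the pair $\mcal{C}^k(M)\perp\mcal{H}^k(M)$ (the only loose end is that $\delta\beta$ is a priori only $L^2$ for $\beta\in H^1\varOmega^{k+1}_\SN(M)$, so one first tests against smooth forms and passes to the limit --- routine).

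The genuine gap is in your completeness step, and it is not merely a deferral to ``elliptic theory'': the tool you invoke cannot do the job you assign to it. The Gaffney inequality, as you yourself state it, controls $\|\omega\|_{H^1}$ only for forms satisfying $\tangent\omega=0$ or $\normal\omega=0$; but the weak harmonic field produced in your argument --- an $L^2$ form orthogonal to $\mcal{E}^k(M)$ and $\mcal{C}^k(M)$ --- satisfies no boundary condition at all, so the inequality does not apply to it. Worse, the conclusion you want from it is false: a weak $L^2$ harmonic field need not belong to $\mcal{H}^k(M)\subset H^1\varOmega^k(M)$. On the unit disk, $\omega=dh$ with $h$ harmonic, $\nabla h\in L^2$ but $\nabla h\notin H^1$ (take boundary data in $H^{1/2}$ but not $H^{3/2}$), is weakly closed and weakly co-closed yet not $H^1$. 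This is precisely why the theorem is stated with the $L^2$-closure $L^2\mcal{H}^k(M)$ rather than with $\mcal{H}^k(M)$ itself. What the completeness argument actually requires is the approximation theorem of \cite{b_GSch1}: every weak $L^2$ harmonic field lies in the $L^2$-closure of $\mcal{H}^k(M)$, i.e.\ $H^1$ (indeed smooth) harmonic fields are $L^2$-dense in weak harmonic fields; this is proved there by a regularization in a collar of $\pd M$, not by Gaffney plus Rellich. Where Gaffney plus Rellich genuinely enter is the step you mention only in passing at the end --- closedness of $\mcal{E}^k(M)$ and $\mcal{C}^k(M)$ in $L^2$, via Poincar\'{e}-type estimates for $d$ on $H^1\varOmega^{k-1}_\ST(M)$ and $\delta$ on $H^1\varOmega^{k+1}_\SN(M)$ --- so your toolkit is the right one, but it is aimed at the wrong step, and the step it was aimed at still needs its own (different) argument.
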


 When $\pd M = \emptyset$, the Hodge-Morrey decomposition includes the Hodge-Kodaira decomposition for compact manifolds. 


\begin{theorem}[Friedrichs decomposition]\label{FDcomp}
 The space $\mcal{H}^k(M) \subset H^1\varOmega^k(M)$ of harmonic fields on a compact $\pd$-manifold $M$ can be decomposed into 
 \begin{align}
  \mcal{H}^k(M) &= \mcal{H}^k_\ST(M) \varoplus \mcal{H}^k_{C}(M), \\
  \mcal{H}^k(M) &= \mcal{H}^k_\SN(M) \varoplus \mcal{H}^k_{E}(M),
 \end{align}
 where the subspaces of $\mcal{H}^k(M)$, i.e., the subspaces of exact harmonic and co-exact harmonic fields have been defined as follows:
 \begin{align}
  \mcal{H}^k_{E}(M) &:= \left\{ \kappa \in \mcal{H}^k(M) \mid \kappa = d \epsilon \right\},\\
  \mcal{H}^k_{C}(M) &:= \left\{ \kappa \in \mcal{H}^k(M) \mid \kappa = \delta \gamma \right\}.
 \end{align}
 These decompositions are valid for $W^{s,p}\mcal{H}^k(M)$, where $s \in \mbb{N}_0$, and $p \geq 2$.
\end{theorem}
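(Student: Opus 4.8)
The plan is to establish the first splitting $\mcal{H}^k(M) = \mcal{H}^k_\ST(M) \varoplus \mcal{H}^k_{C}(M)$ in two steps, orthogonality and completeness, and then to obtain the second splitting for free from Hodge duality. First I would verify that the sum is $L^2$-orthogonal. Take a Dirichlet field $\lambda \in \mcal{H}^k_\ST(M)$ and a co-exact harmonic field $\delta\gamma \in \mcal{H}^k_{C}(M)$ with $\gamma \in H^1 \varOmega^{k+1}(M)$. Applying the Green formula (Theorem~\ref{Green}) with $\omega = \lambda$ and $\eta = \gamma$ gives $\int_M d\lambda \wedge \Hodge \gamma = \int_M \lambda \wedge \Hodge \delta\gamma + \int_{\pd M} \tangent\lambda \wedge \Hodge \normal\gamma$. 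Since $\lambda$ is harmonic, $d\lambda = 0$, and since it is a Dirichlet field, $\tangent\lambda = 0$; hence $\int_M \lambda \wedge \Hodge \delta\gamma = 0$, i.e. $\lambda$ is orthogonal to $\delta\gamma$ in the $L^2$-metric of Definition~\ref{Def22}. In particular $\mcal{H}^k_\ST(M) \cap \mcal{H}^k_{C}(M) = \{0\}$, so the sum is direct.

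The main work is completeness: every harmonic field is a sum of the two types. Given $\lambda \in \mcal{H}^k(M)$, let $P$ denote the $L^2$-orthogonal projection of $\mcal{H}^k(M)$ onto the closed subspace $\overline{\mcal{H}^k_{C}(M)}$, and set $\mu := \lambda - P\lambda$, which is harmonic and satisfies $\mu \perp \mcal{H}^k_{C}(M)$. It remains to show $\tangent\mu = 0$, i.e. $\mu \in \mcal{H}^k_\ST(M)$. Repeating the Green-formula computation above with $\mu$ in place of $\lambda$ yields $\int_{\pd M} \tangent\mu \wedge \Hodge \normal\gamma = 0$ for every $(k+1)$-form $\gamma$ whose co-differential $\delta\gamma$ is harmonic. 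The obstacle is that this orthogonality only tests $\tangent\mu$ against those boundary traces $\Hodge\normal\gamma$ that arise from such constrained potentials; one must show these traces exhaust enough of $\varOmega(\pd M)$ to force $\tangent\mu = 0$. I would handle this by constructing, for prescribed boundary data, a potential $\gamma$ with $\delta\gamma \in \mcal{H}^k_{C}(M)$ and the required normal trace as the solution of an elliptic boundary value problem, whose solvability rests on the same Gaffney--Morrey a priori estimates that underlie the Hodge--Morrey decomposition (Theorem~\ref{HMDcomp}). This existence-and-regularity argument is the hard part of the proof.

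Finally, the second splitting follows from the first by the Hodge duality of Proposition~\ref{Prop1}. The operator $\Hodge\colon \varOmega^k(M) \to \varOmega^{n-k}(M)$ is an $L^2$-isometry that carries $\mcal{H}^k(M)$ to $\mcal{H}^{n-k}(M)$, interchanges Dirichlet and Neumann fields (since $\normal \Hodge\lambda = \Hodge \tangent\lambda$, so $\tangent\lambda = 0$ gives $\normal \Hodge\lambda = 0$), and sends co-exact harmonic fields to exact harmonic fields because $\Hodge \delta\gamma = \pm\, d \Hodge \gamma$. Applying $\Hodge$ to $\mcal{H}^k(M) = \mcal{H}^k_\ST(M) \varoplus \mcal{H}^k_{C}(M)$ therefore yields $\mcal{H}^{n-k}(M) = \mcal{H}^{n-k}_\SN(M) \varoplus \mcal{H}^{n-k}_{E}(M)$, which is the asserted second decomposition after replacing $n-k$ by $k$; orthogonality is preserved because $\Hodge$ is an isometry. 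The extension to $W^{s,p}\mcal{H}^k(M)$ with $s \in \mbb{N}_0$ and $p \geq 2$ would follow by elliptic-regularity bootstrapping applied to the same boundary value problem.
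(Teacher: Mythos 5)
The paper itself does not prove this theorem: Section~\ref{Sec3} imports it verbatim as a known result from Schwarz~\cite{b_GSch1}, so there is no in-paper argument to compare yours against, and your proposal must stand on its own. Its two routine parts are correct: the $L^2$-orthogonality of $\mcal{H}^k_\ST(M)$ and $\mcal{H}^k_{C}(M)$ via the Green formula (Theorem~\ref{Green}, with $d\lambda = 0$ and $\tangent\lambda = 0$ killing both remaining terms) is the standard argument, and deriving the second splitting from the first by applying $\Hodge$ --- an $L^2$-isometry that exchanges $\tangent$ with $\normal$ (Proposition~\ref{Prop1}) and co-exact with exact harmonic fields --- is also sound.

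The genuine gap is the completeness step, and it is twofold. First, your projection $P$ is onto the $L^2$-closure $\overline{\mcal{H}^k_{C}(M)}$, so $\mu = \lambda - P\lambda$ is a priori only an $L^2$ form; the trace $\tangent \mu$ is then not even defined, since tangential traces require $H^1$-regularity --- this is precisely the distinction that forces Theorem~\ref{HMDcomp} to be stated with the closure $L^2\mcal{H}^k(M)$ rather than $\mcal{H}^k(M)$. You cannot ``repeat the Green-formula computation'' for $\mu$ without first proving that $P$ preserves $H^1$ (equivalently, that $\mcal{H}^k_{C}(M)$ is closed in the appropriate topology), which itself already needs the elliptic estimates you are postponing. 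Second, even granting regularity, the implication that $\mu \perp \mcal{H}^k_{C}(M)$ forces $\tangent \mu = 0$ is the entire analytic content of the theorem: it amounts to showing that the boundary traces $\Hodge \normal \gamma$ of admissible potentials $\gamma$ span a sufficiently rich (dense) family of boundary forms. You propose to get this from ``an elliptic boundary value problem,'' but you never state that problem, verify its compatibility conditions, or establish boundary regularity of its solutions; in Schwarz's treatment this is exactly what the Dirichlet/Neumann potential theory (Gaffney's inequality plus regularity up to the boundary) is built to deliver, and it occupies the bulk of the proof. The same deferral affects your final claim about $W^{s,p}\mcal{H}^k(M)$, which is asserted via ``bootstrapping'' without argument. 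In short: a correct outline of the standard strategy, but its central step is acknowledged rather than proved.
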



\subsection{De Rham complex on manifolds with boundary}

The de Rham complex consists of the set of the space of differential forms and the exterior derivative.
The cohomology of the complex is related with the topology of manifolds through harmonic forms according to Hodge theorem.

This section explains the relationship between the de Rham complex and the previously discussed particular subspaces of harmonic forms on $\pd$-manifolds obeying boundary conditions.


\begin{definition}
 A form $\omega \in \varOmega^k(M)$ is called \emph{closed} if $d \omega = 0$.
 A form $\omega \in \varOmega^k(M)$ is called \emph{exact} if there exist a form $\eta \in \varOmega^{k-1}(M)$ such that $d \eta = \omega$.
\end{definition}


\begin{definition}
 Let $M$ be a $\pd$-manifold.
 \emph{The $k$th cohomology of the de Rham complex} $(\varOmega(M),d)$ of differential forms over $M$ without imposing boundary conditions is the quotient space
 \begin{align}
  \HDR^k(M,d) = \Ker d^k/\Im d^{k-1},
 \end{align}
 where the exterior derivative $d$ for $k$-forms is denoted by $d^k\colon \varOmega^k(M) \to \varOmega^{k+1}(M)$, and the spaces of all closed $k$-forms and all exact $k$-forms are defined by
the cycle $Z^k(M) = \Ker d^k$ and the boundary $B^k(M) = \Im d^{k-1}$, respectively.
 \emph{The $k$th cohomology of the dual complex} $(\varOmega(M),\delta)$ is
 \begin{align}
  \HDR^k(M,\delta) = \Ker \delta^k/\Im \delta^{k+1},
 \end{align}
 where $\delta^k\colon \varOmega^k(M) \to \varOmega^{k-1}(M)$ is the co-differential operator. 
\end{definition}


\begin{theorem}[Hodge isomorphism]\label{HodgeIso}
 Let $M$ be a compact $\pd$-manifold. Then,
 \begin{align}
  \HDR^k(M,d) \cong \mcal{H}^k_\SN(M), \quad \HDR^k(M,\delta) \cong \mcal{H}^k_\ST(M).
 \end{align}
\end{theorem}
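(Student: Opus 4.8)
The plan is to construct, for the first isomorphism, the natural linear map $\Phi\colon \mcal{H}^k_\SN(M) \to \HDR^k(M,d)$ assigning to a Neumann field $\lambda$ its de Rham class $[\lambda]$, and to show $\Phi$ is a well-defined isomorphism; the second isomorphism I would then deduce from the first by Hodge-star duality. The map is well defined since every harmonic field satisfies $d\lambda = 0$, so $\lambda$ is a cycle and $[\lambda] \in Z^k(M)/B^k(M)$ is meaningful.

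To prove injectivity I would assume $\lambda \in \mcal{H}^k_\SN(M)$ is a boundary, $\lambda = d\eta$, and apply the Green formula (Theorem~\ref{Green}) to the pair $(\eta,\lambda)$:
\begin{align}
 \left<\left< \lambda, \lambda \right>\right> = \left<\left< d\eta, \lambda \right>\right> = \left<\left< \eta, \delta\lambda \right>\right> + \int_{\pd M} \tangent \eta \wedge \Hodge \normal \lambda. \nt
\end{align}
Both terms on the right vanish: $\delta\lambda = 0$ since $\lambda$ is harmonic, and $\normal\lambda = 0$ by the Neumann condition. Hence $\lambda = 0$, so $\Phi$ is injective.

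For surjectivity I would take a closed representative $\omega \in Z^k(M)$ and apply the Hodge-Morrey decomposition (Theorem~\ref{HMDcomp}), $\omega = d\alpha + \delta\beta + \kappa$ with $\alpha \in H^1\varOmega^{k-1}_\ST(M)$, $\beta \in H^1\varOmega^{k+1}_\SN(M)$, $\kappa \in \mcal{H}^k(M)$. The crucial step is to kill the co-exact part: from $d\omega = 0$ and $d\kappa = d(d\alpha) = 0$ one gets $d\delta\beta = 0$, and the Green formula applied to $(\delta\beta,\beta)$, using $\normal\beta = 0$, yields $\left<\left< \delta\beta, \delta\beta \right>\right> = 0$, so $\delta\beta = 0$ and $\omega = d\alpha + \kappa$. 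Splitting $\kappa$ by the Friedrichs decomposition (Theorem~\ref{FDcomp}) as $\kappa = \kappa_N + d\epsilon$ with $\kappa_N \in \mcal{H}^k_\SN(M)$ and $d\epsilon \in \mcal{H}^k_{E}(M)$, I obtain $\omega = d(\alpha + \epsilon) + \kappa_N$, hence $[\omega] = [\kappa_N] = \Phi(\kappa_N)$, proving surjectivity.

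For the dual statement I would use $\delta = \pm\,\Hodge d\,\Hodge$: the Hodge star then carries $\delta$-closed (resp.\ $\delta$-exact) $k$-forms to $d$-closed (resp.\ $d$-exact) $(n-k)$-forms, inducing $\HDR^k(M,\delta) \cong \HDR^{n-k}(M,d)$, and composing with $\HDR^{n-k}(M,d) \cong \mcal{H}^{n-k}_\SN(M)$ from the first part and the Hodge duality $\mcal{H}^{n-k}_\SN(M) \cong \mcal{H}^k_\ST(M)$ of the Remark after Definition~\ref{DNfields} gives the result. I expect the main obstacle to be analytic rather than algebraic: applying the Green formula to the Hodge-Morrey components presupposes that $\alpha,\beta,\kappa$ are regular enough up to $\pd M$ for the traces $\tangent$, $\normal$ and the integration by parts to be legitimate, a fact that rests on the elliptic boundary regularity underlying Theorem~\ref{HMDcomp} rather than on the formal identities above.
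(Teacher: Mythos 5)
The paper does not actually prove this theorem: it is stated as a known result imported from the reference \cite{b_GSch1}, so there is no internal proof to compare against. Your argument is correct and is essentially the standard proof from that source: represent each de Rham class by a Neumann field after killing the co-exact Hodge--Morrey component and absorbing the exact harmonic part via the Friedrichs splitting, prove injectivity by Green's formula, and deduce the $\delta$-statement by Hodge-star duality. One step deserves a sharper formulation: applying Green's formula to the pair $(\delta\beta,\beta)$ presupposes $W^{1,\cdot}$-regularity of $\delta\beta$, which the Hodge--Morrey decomposition (Theorem~\ref{HMDcomp}) does not directly provide; the standard fix is to pair $\omega$ itself against $\delta\beta$, i.e.\ Green's formula (Theorem~\ref{Green}) for $(\omega,\beta)$ with $d\omega=0$ and $\normal \beta=0$ gives $\left<\left< \omega, \delta\beta \right>\right> = 0$, while $L^2$-orthogonality of the decomposition gives $\left<\left< \omega, \delta\beta \right>\right> = \left<\left< \delta\beta, \delta\beta \right>\right>$, forcing $\delta\beta=0$ with no regularity beyond what the decomposition supplies. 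Beyond that, the caveats you flag at the end --- elliptic boundary regularity of Neumann fields (so that $\kappa_N$ is smooth) and the agreement of Sobolev and smooth de Rham cohomology (so that $\omega-\kappa_N$ being exact with an $H^1$ potential yields a smooth potential) --- are precisely the analytic content that \cite{b_GSch1} establishes and that a self-contained proof would have to include.
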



\begin{corollary}\label{HodgeIsoCor1}
 On a compact $\pd$-manifold $M$,
 \begin{align}
  \HDR^k(M,d) \cong \HDR^{n-k}(M,\delta).
 \end{align}
 %
\end{corollary}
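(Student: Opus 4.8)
The plan is to obtain the claimed isomorphism by concatenating the two isomorphisms already at our disposal: the Hodge isomorphism of Theorem~\ref{HodgeIso}, which identifies each de Rham cohomology group with a space of harmonic fields satisfying a boundary condition, and the Hodge duality between Dirichlet and Neumann fields recorded in the Remark following Definition~\ref{DNfields}. No new analysis is required; the content is entirely in matching up the degrees and the tangential/normal labels correctly.

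First I would apply the Hodge isomorphism to the complex $(\varOmega(M),d)$ in degree $k$ to write $\HDR^k(M,d) \cong \mcal{H}^k_\SN(M)$. Next I would invoke the Hodge duality $\mcal{H}^k_\ST(M) \cong \mcal{H}^{n-k}_\SN(M)$; replacing $k$ by $n-k$ throughout turns this into $\mcal{H}^{n-k}_\ST(M) \cong \mcal{H}^k_\SN(M)$, i.e.\ the Neumann field space in degree $k$ is isomorphic to the Dirichlet field space in the complementary degree $n-k$. Finally I would apply the Hodge isomorphism to the dual complex $(\varOmega(M),\delta)$ in degree $n-k$, giving $\HDR^{n-k}(M,\delta) \cong \mcal{H}^{n-k}_\ST(M)$. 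Reading these three identifications in sequence yields the chain
\begin{align}
\HDR^k(M,d) \cong \mcal{H}^k_\SN(M) \cong \mcal{H}^{n-k}_\ST(M) \cong \HDR^{n-k}(M,\delta), \nt
\end{align}
which is exactly the assertion.

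Since each link is an already-established isomorphism, there is no genuine obstacle; the only point demanding care is the index bookkeeping, namely that the degree shift $k \mapsto n-k$ supplied by Hodge duality is precisely what is needed to pass from the $\SN$-field attached to the $d$-cohomology to the $\ST$-field attached to the $\delta$-cohomology. I would also verify that the harmonic-field spaces appearing in the Remark and in Theorem~\ref{HodgeIso} are literally the same objects (the $H^1$ harmonic fields of Definition~\ref{DNfields}), so that the isomorphisms compose without any change of underlying function space. Alternatively, one could bypass the Remark and realize the middle isomorphism concretely through the Hodge star, using $\Hodge(\normal \omega) = \tangent(\Hodge \omega)$ from Proposition~\ref{Prop1} together with the fact that $\Hodge$ preserves harmonicity, but this merely reproves the duality that is already available.
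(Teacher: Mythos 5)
Your proof is correct and is essentially the paper's own (implicit) argument: the corollary is stated there as an immediate consequence of Theorem~\ref{HodgeIso} combined with the Hodge duality $\mcal{H}^k_\ST(M) \cong \mcal{H}^{n-k}_\SN(M)$ from the remark after Definition~\ref{DNfields}, which is exactly the chain $\HDR^k(M,d) \cong \mcal{H}^k_\SN(M) \cong \mcal{H}^{n-k}_\ST(M) \cong \HDR^{n-k}(M,\delta)$ you wrote down. Your index bookkeeping under the substitution $k \mapsto n-k$ is also correct, so nothing further is needed.
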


\memo{
\begin{corollary}
The Hodge operator $\Hodge$ on $\omega(M)$ induces an isomorphism
 \begin{align}
  \Hodge_P\colon \HDRa^k(M) \to \HDRr^{n-k}(M,\pd M)
 \end{align}
 that is referred to as the Poincar\'{e} duality for manifolds with boundary.
\end{corollary}
} 


\section{Main results}\label{Sec4}

\memo{
{\color{blue}

\begin{enumerate}
\item Hodge decomposition
\item Harmonic form is an extra term in standard Stokes-Dirac structure
\item What is the physical meaning of Harmonic forms?
\item Harmonic forms are related to global topology
\item That is used for dividing and interconnecting of global topology in the sense of port-representations.
\end{enumerate}

}
} 


\subsection{Extension of Stokes-Dirac structures in terms of topological geometry}

The port-Hamiltonian representation of distributed parameter systems is derived from the Stokes-Dirac structure~\cite{j_ASch1}.
The Stokes-Dirac structure is one of Dirac structures that is a generalized symplectic and poisson, and it inherits boundary integrability from the Stokes theorem~\cite{b_SMor1} in terms of differential forms defined on manifolds that is an abstract version of that in 3-dimensional vector analysis.
Therefore, the Stokes-Dirac structure clarifies the relation between variables distributed in the internal of a system domain and variables restricted on its boundary in the sense of boundary integration.
Thus, a power balance equation on a boundary of the system can be introduced from the relation, and it can be used for boundary energy controls of distributed parameter systems~\cite{b_AMac1}.


In this methodology, the domain is assumed to be contractible, i.e., it is continuously shrunk to a point just like a Euclidian space 
Then, the boundary is considered as smooth regions surrounding such a domain.
In other words, such a domain with the boundary is called homeomorphic to a point that means the simplest case of the shape of manifolds, so to say, topologically trivial (e.g., a donut or a cup with a handle is not trivial).
In general, the shapes of system domains and their boundaries are complex.
One of understandable examples is an electrical circuit that may include non-contractible loops by regarding electrical elements and wirings as 1-dimensional domains.
In such a case, constraints that are Kirchhoff's laws must be additionally considered for describing interconnections.


This paper attempts to extend the domain of Stokes-Dirac structures to be more general shape.
Then, it is clarified what conditions dynamics defined on domains with a non-trivial topology must satisfy in their port-representations and interconnections.
In such an extension, a particular differential form, called a Harmonic plays a central role, and the form is closely related with the topology of manifolds. 
Simply speaking, harmonic forms reflect the shape of manifolds through their (co)homology groups (see also \textit{Remark\;\ref{Rem41}} for details).
A harmonic form $\omega$ is defined by $d \omega = 0$ and $\delta \omega = 0$ (see \textit{Definition\;\ref{Def22}}), and it is included in differential forms defined on manifolds with a non-trivial topology.
We shall first see a more basic result than \textit{Theorem\;\ref{HMDcomp}} for manifolds with boundary. 
Indeed, according to the Hodge decomposition on a compact manifold without boundary, an arbitrary $k$-form $\omega \in \varOmega^k(M)$ can be written as
\begin{align}
 \omega = d \alpha + \delta \beta + \omega_H
\end{align}
where $\alpha \in \varOmega^{k-1}(M)$, $\beta \in \varOmega^{k+1}(M)$, and $(d \delta + \delta d)\, \omega_H = 0$.
Harmonic forms are a generalization of harmonic functions that means solutions of Poisson equations at equilibrium in, e.g., eigenvalue problems of elliptic partial differential equations.
Actually, $\varDelta = d \delta + \delta d$ is called a Laplacian.
On the other hand, the topology of manifolds is described by homology and cohomology.
For instance, one of invariances of manifolds, the Euler number $\chi(M) = \sum_{i=0}^n (-1)^i \beta_i$ can be calculated by the Betti number $\beta_k = \dim H_k(M)$, 
where the dimension of the homology $H_k(M)$ means the number of the $k$-dimensional cycles in $M$, the cycle is a chain consisting of $k$-simplexes without boundary, and simplexes are elemental figures for dividing a whole figure, e.g., points, line segments, triangles, and higher-dimensional counterparts.


\begin{remark}[\cite{b_SMor1}~]\label{Rem41}
On an orientable compact manifold, the following relations hold:
\begin{enumerate}
\item Let $H_k(M)$ be a singular homology group with $\mbb{R}$ coefficients that is derived from a triangulation of $M$ given by the union of simplicial complexes $K$, i.e., $H_k(M) \cong H_k(K)$.
Hence, the singular homology $H_k(M)$ directly represents the topology of $M$. 
\item The dual of singular homology $H_k(M)$ is the singular cohomology $H^k(M)$, and this correspondence can be described by the isomorphism $H^k(M) = (H_k(M))^\ast$.
\item According to the De Rham theorem, the isomorphism $H^k(M) \cong \HDR^k(M)$ exists, i.e., $H_k(M) \cong (\HDR^k(M))^\ast$.
\item By the Hodge theorem, the identification $\varOmega^k_H(M) \cong \HDR^k(M)$ is given, where $\varOmega^i_H(M)$ is the space of harmonic forms on $M$ such that $d \omega_H = 0$ and $\delta \omega_H = 0$ for any form $\omega_H \in \varOmega^i(M)$.
\item Furthermore, if a manifold is closed, i.e., compact without boundary, the isomorphism $H_k(M) \cong \HDR^{n-k}(M)$ is obtained from the Poincar\'{e} duality $\HDR^{n-k}(M) \cong (\HDR^{k}(M))^\ast$. As a result, $H_k(M) \cong \varOmega^{n-k}_H(M)$.
\end{enumerate}
\end{remark}


From the above discussion, one might immediately guess the followings:
\begin{itemize}
\item The Stokes-Dirac structures on general manifolds should include harmonic forms affected by a non-trivial topology.
\item Moreover, the harmonic forms may correspond with flows of vector fields or differential forms.
\item As a further consideration, one of important properties as a port-representation, port-interconnections may change the topology of system domains.
\end{itemize}
These are actually true, and answers to the first and second questions will be shown in the following sections.
Consequently, a global port-interconnection and decomposition for preserving information regarding a global energy flow on a whole domain is derived from harmonic forms, and it will be possible that this concept is applied to attaching extra domains with dynamics for a global energy flow shaping.


\subsection{Stokes-Dirac strictures on manifolds with boundary}

 We shall first recall the Stokes-Dirac structure.
Let $N$ be an $n$-dimensional Riemannian manifold.
Consider a compact oriented subdomain $\Z \subset N$ with a boundary $\pd \Z$.
Now, we assume that $\Z$ is a $\pd$-manifold (see \textit{Definition\;\ref{PDmfd}}).
The purpose of this assumption is to subdivide fields of vectors and differential forms into two types, i.e., normal and tangential components that are related with two different harmonic forms (see \textit{Definition\;\ref{DNfields}}). Note that this change doesn't affect the Stokes-Dirac structure itself, because the restriction $\omega|_{\Z}$ (and $\omega|_{\pd \Z}$) for differential forms $\omega \in \varOmega^k(N)$ can be treated in the same way of the conventional case~\cite{j_ASch1, b_SMor1}; therefore, we denote $\omega|_{\Z}$ by $\omega$ simply.
The Stokes-Dirac structure on $\Z$ can be implicitly described in the following~\cite{j_GNis1}:
\begin{align}
 \left\{
 \begin{array}{l}
  f^p_E = (-1)^r de^q_D \in \varOmega^p_E(\Z), \\[1mm]
  e^p_D = \Hodge f^p_E \in \varOmega^{q-1}_D(\Z), \\[1mm]
  f^q_E = de^p_D \in \varOmega^q_E(\Z), \\[1mm]
  e^q_D = \Hodge f^q_E \in \varOmega^{p-1}_D(\Z)
 \end{array}
 \right. \label{SD}
\end{align}
with the boundary port
\begin{align}
 f_b = e^p_D|_{\pd \Z}, \quad e_b = (-1)^p e^q_D|_{\pd \Z},
\end{align}
where $r = pq + 1$, we have defined the spaces of exact and co-exact forms as
\begin{align}
 \varOmega^k_E(\Z) &= \left\{ d\alpha \mid \alpha \in \varOmega^{k-1}(\Z) \right\}, \\
 \varOmega^k_D(\Z) &= \left\{ \delta \beta \mid \beta \in \varOmega^{k+1}(\Z) \right\},
\end{align}
and $\delta = (-1)^{nk+n+1}\Hodge d \Hodge\colon \varOmega^k(\Z) \to \varOmega^{k-1}(\Z)$ is the co-differential operator that is the adjoint of $d$ in the sense of the pairing $\langle\omega,\eta\rangle = \int_M \omega \wedge \Hodge \eta$, and $\Hodge\colon \varOmega^k(M) \to \varOmega^{n-k}(M)$ is the Hodge star operator.
Then, the power balance of the Hamiltonian $\mscr{H}$ of a given system is given as
\begin{align}
 \dfrac{d}{dt}\mscr{H} = - \int_\Z \left( e^p \wedge f^p + e^q \wedge f^q \right) = \int_{\pd \Z} e^b \wedge f^b. \label{PBeq1}
\end{align}
The correspondence between variables in (\ref{PBeq1}) and a Hamiltonian is ignored here, and we concentrate our interest on its geometrical property.


\subsection{Stokes-Dirac complex and cohomology}

The spaces used in the definition of the Stokes-Dirac structure are related with each other as the following diagram.


\begin{definition}
 The relation between the spaces of the variables in the Stokes-Dirac structure (\ref{SD}) on $\Z$ can be illustrated by the following diagram~\cite{j_GNis1}:
\begin{equation}
\hs{-2}
\begin{xyczero}
\xy 
\UseTips
\xymatrix @R=1.5pc @C=1.5pc {
& 0 \ar[r]^{d} & \varOmega^{p-1}_\SD \ar@{<->}[d]_{\ast} \ar[r]^{d} & \varOmega^{p} \ar@{<->}[d]_{\ast} \ar[r]^{d} & \varOmega^{p+1}_\SE \ar[r]^{d} & 0 \\
0 & \ar[l]_{d} \varOmega^{q+1}_\SE & \ar[l]_{d} \varOmega^{q} & \ar[l]_{d} \varOmega^{q-1}_\SD & \ar[l]_{d} 0 & \ .
}
\endxy
\end{xyczero}\label{bdig1}
\hs{-3}
\end{equation}
 Then, we call the above two parallel sequences that are equivalent to short exact sequences of the de Rham complex \emph{the Stokes-Dirac complex}:
\begin{align}
\left\{
\left(\varOmega^{p}(\Z), d \right),
\left(\varOmega^{q}(\Z), d \right) \label{SDcomp1}
\right\}.
\end{align}
%
\end{definition}


Next, we define the cohomologies of the Stokes-Dirac complex 
are isomorphic to the spaces of harmonic forms, i.e., we can know the availability of harmonic fields by checking the cohomologies.


\begin{definition}
 We define the cohomology $\HSD(\Z)$ of the Stokes-Dirac complex by the set of the de Rham cohomologies $\{ \HDRed{q}{\Z}, \HDRde{p}{\Z}, \HDRed{p}{\Z}, \HDRde{q}{\Z} \}$
for the spaces $\{ \varOmega^{p-1}(\Z), \varOmega^{p}(\Z), \varOmega^{q-1}(\Z), \varOmega^{q}(\Z) \}$ used for defining $\{ e^q, f^p, e^p, f^q \}$, respectively.
\end{definition}


\begin{proposition}
 The cohomology $\HSD(\Z)$ of the Stokes-Dirac complex on a compact $\pd$-manifold $M$ is represented by $\{ \mcal{H}^q_\SN(\Z), \mcal{H}^p_\ST(\Z), \mcal{H}^p_\SN(\Z), \mcal{H}^q_\ST(\Z) \}$.
\end{proposition}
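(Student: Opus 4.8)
The plan is to recognize that the proposition follows directly from the Hodge isomorphism theorem (Theorem~\ref{HodgeIso}) applied separately to each of the four de Rham cohomology groups that constitute $\HSD(\Z)$. By definition, $\HSD(\Z)$ is the set $\{ \HDRed{q}{\Z}, \HDRde{p}{\Z}, \HDRed{p}{\Z}, \HDRde{q}{\Z} \}$, so it suffices to identify each member with the corresponding space of harmonic fields obeying the appropriate boundary condition.

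First I would invoke Theorem~\ref{HodgeIso}, which asserts that on a compact $\pd$-manifold one has the two isomorphisms $\HDRed{k}{\Z} \cong \mcal{H}^k_\SN(\Z)$ and $\HDRde{k}{\Z} \cong \mcal{H}^k_\ST(\Z)$ for every degree $k$. The essential content is that every de Rham class with respect to $d$ (resp.\ $\delta$) contains a unique harmonic representative satisfying the Neumann (resp.\ Dirichlet) boundary condition; the existence and uniqueness of such representatives is guaranteed by the Hodge--Morrey decomposition (Theorem~\ref{HMDcomp}) together with the Friedrichs decomposition (Theorem~\ref{FDcomp}), both of which have already been established on the compact $\pd$-manifold $\Z$.

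Then I would apply these isomorphisms term by term, carefully tracking the degree and the differential attached to each cohomology. Taking $k=q$ with the operator $d$ gives $\HDRed{q}{\Z} \cong \mcal{H}^q_\SN(\Z)$; taking $k=p$ with $\delta$ gives $\HDRde{p}{\Z} \cong \mcal{H}^p_\ST(\Z)$; taking $k=p$ with $d$ gives $\HDRed{p}{\Z} \cong \mcal{H}^p_\SN(\Z)$; and taking $k=q$ with $\delta$ gives $\HDRde{q}{\Z} \cong \mcal{H}^q_\ST(\Z)$. Assembling these four identifications in the same order as the defining set reproduces exactly $\{ \mcal{H}^q_\SN(\Z), \mcal{H}^p_\ST(\Z), \mcal{H}^p_\SN(\Z), \mcal{H}^q_\ST(\Z) \}$, which is the claim.

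Since each step is a direct citation of Theorem~\ref{HodgeIso}, there is no substantial analytic obstacle here: the genuine mathematical weight has already been absorbed into the Hodge isomorphism and the decomposition results on which it rests. The only point requiring care is the index bookkeeping, namely matching each of the four entries to the correct pairing of degree ($p$ or $q$) with differential ($d$ or $\delta$), and ensuring that the Neumann and Dirichlet labels are not transposed when passing from the $d$-cohomologies to $\mcal{H}^{\bullet}_\SN$ and from the $\delta$-cohomologies to $\mcal{H}^{\bullet}_\ST$.
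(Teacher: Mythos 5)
Your proof is correct and takes essentially the same approach as the paper: a term-by-term application of the Hodge isomorphism (\textit{Theorem\;\ref{HodgeIso}}) to the four de Rham cohomologies $\{ \HDRed{q}{\Z}, \HDRde{p}{\Z}, \HDRed{p}{\Z}, \HDRde{q}{\Z} \}$ constituting $\HSD(\Z)$. The only difference is that the paper's one-line proof also cites \textit{Corollary\;\ref{HodgeIsoCor1}} together with the degree relations $p-1=n-q$ and $q-1=n-p$, which reconcile the degrees of the underlying spaces $\varOmega^{p-1}(\Z)$ and $\varOmega^{q-1}(\Z)$ with the cohomological degrees $q$ and $p$; your literal reading of the definition of $\HSD(\Z)$ makes that bookkeeping step unnecessary.
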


\begin{proof}
 We have used the isomorphisms in \textit{Theorem\;\ref{HodgeIso}} and \textit{Corollary\;\ref{HodgeIsoCor1}}, where $p-1 = n-q$ and $q-1 = n-p$.
\end{proof}


\subsection{Harmonic forms in Stokes-Dirac structures}

In this section, the relation between the standard Stokes-Dirac structure and harmonic forms is clarified.

The following fact is derived from a generalized Hodge decomposition for manifolds with boundary.


\begin{lemma}\label{HMFdcomp1}
 According to Hodge-Morrey-Friedrichs decomposition (\textit{Theorems\;\ref{HMDcomp} and \ref{FDcomp} }), a differential $k$-form $\omega \in \varOmega^k(\Z)$ on a $\pd$-manifold $M$ has the unique splitting 
 \begin{align}
  \omega = d \alpha + \delta \beta + \delta \gamma + \lambda,
 \end{align}
 where $d\alpha \in \mcal{E}^k(\Z)$, $\delta \beta \in \mcal{C}^k(\Z)$, $\delta \gamma \in \mcal{H}^k_{C}(\Z)$, and $\lambda \in \mcal{H}^k_\ST(\Z)$.
\end{lemma}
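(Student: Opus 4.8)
The plan is to obtain the four-term splitting by simply concatenating the two decomposition theorems already established, and then to read off uniqueness from the uniqueness built into each of them. First I would regard $\omega \in \varOmega^k(\Z)$ as an element of $L^2\varOmega^k(\Z)$ and apply the Hodge-Morrey decomposition (\textit{Theorem\;\ref{HMDcomp}}) to write
\begin{align}
 \omega = d\alpha + \delta\beta + \mu, \nt
\end{align}
where $d\alpha \in \mcal{E}^k(\Z)$, $\delta\beta \in \mcal{C}^k(\Z)$, and $\mu \in L^2\mcal{H}^k(\Z)$ is a harmonic field. Because the three summands in \textit{Theorem\;\ref{HMDcomp}} are mutually $L^2$-orthogonal, this coarse splitting, and in particular the harmonic remainder $\mu$, is uniquely determined by $\omega$.

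Next I would resolve the harmonic part using the first Friedrichs decomposition (\textit{Theorem\;\ref{FDcomp}}), namely $\mcal{H}^k(\Z) = \mcal{H}^k_\ST(\Z) \varoplus \mcal{H}^k_{C}(\Z)$, which yields $\mu = \lambda + \delta\gamma$ with $\lambda \in \mcal{H}^k_\ST(\Z)$ and $\delta\gamma \in \mcal{H}^k_{C}(\Z)$. Substituting gives $\omega = d\alpha + \delta\beta + \delta\gamma + \lambda$ with exactly the asserted memberships. For uniqueness of the combined four-term splitting I would argue in two stages: given two such representations, the $L^2$-orthogonality of $\mcal{E}^k \varoplus \mcal{C}^k \varoplus L^2\mcal{H}^k$ forces the $\mcal{E}^k$-parts, the $\mcal{C}^k$-parts, and the harmonic remainders $\lambda + \delta\gamma$ to coincide separately; then the directness of the Friedrichs sum (trivial intersection $\mcal{H}^k_\ST \cap \mcal{H}^k_{C} = \{0\}$) forces $\lambda$ and $\delta\gamma$ to coincide as well. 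Thus the full decomposition is unique.

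The one genuine subtlety, and the step I would treat as the main obstacle, is regularity rather than algebra: \textit{Theorem\;\ref{HMDcomp}} is phrased on $L^2\varOmega^k(\Z)$ with potentials only in the $H^1$ spaces $H^1\varOmega^{k-1}_\ST$ and $H^1\varOmega^{k+1}_\SN$, so a priori the identity for $\omega$ holds only in $L^2\varOmega^k(\Z)$. To land the statement in $\varOmega^k(\Z)$ as claimed, I would invoke elliptic regularity for the Hodge-Laplacian $\varDelta = d\delta + \delta d$ under the tangential/normal boundary conditions defining $\mcal{E}^k$, $\mcal{C}^k$, and $\mcal{H}^k_\ST$: when $\omega$ is smooth, each component is characterized as the solution of an elliptic boundary value problem with smooth data and hence is smooth, so every term in the splitting lies in $\varOmega^k(\Z)$. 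With this bootstrap in place the remaining verification is purely the bookkeeping described above.
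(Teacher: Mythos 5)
Your proposal is correct and follows essentially the same route as the paper, which states the lemma as an immediate consequence of concatenating Theorem~\ref{HMDcomp} (Hodge--Morrey) with the first Friedrichs splitting $\mcal{H}^k = \mcal{H}^k_\ST \varoplus \mcal{H}^k_{C}$ of Theorem~\ref{FDcomp}, exactly as you do. Your additional care with uniqueness (orthogonality plus directness of the Friedrichs sum, with the $L^2$ case covered by the final clause of Theorem~\ref{FDcomp} at $s=0$, $p=2$) and with elliptic regularity goes beyond what the paper records, but it fills in details rather than changing the argument.
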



The following inner product is equivalent to the pairing between Hodge dual differential forms, $\omega \in \varOmega^i(\Z)$ and $\eta \in \varOmega^{n-i}(\Z)$, used for defining effort and flow variables in distributed port-Hamiltonian systems.


\begin{lemma}
 The following transformation of the inner product is derived from Green formula~(\ref{Green}):
 \begin{align}
  &\langle \langle f^i, (-1)^{s} \Hodge e^i \rangle \rangle = \int_{M} d e^j \wedge e^i 
  = \int_{M} d e^j \wedge \Hodge f^i \nt \\
  &\quad = (-1)^{i} \int_{M} e^j \wedge d \Hodge f^i  + \int_{\pd M} \tangent e^j \wedge \tangent \Hodge f^i \nt \\
  &\quad = (-1)^{i} \int_{M} e^j \wedge d \Hodge f^i  + \int_{\pd M} \tangent e^j \wedge \Hodge \normal f^i \label{IPair1}
 \end{align}
for $\{i, j\} \in \{ \{p, q\}, \{q, p\} \}$, where $s = i(n-i) = i(j-1)$.
\end{lemma}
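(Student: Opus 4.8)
The plan is to read the displayed chain as four successive equalities and to verify each by invoking exactly one of the stated tools, keeping careful track of the degrees. Recall from the Stokes--Dirac relations (\ref{SD}) that $p+q = n+1$, that $f^i$ is an $i$-form, that $e^i = \Hodge f^i$ is an $(n-i)$-form, and that for $\{i,j\}$ equal to one of the pairs $\{p,q\}$, $\{q,p\}$ the form $e^j$ has degree $n-j = i-1$, so that $de^j$ has degree $i$, matching $f^i$. These degree counts are what make every wedge product below a top form and fix all the signs, via the single relation $i+j=n+1$.

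First I would establish the two opening equalities, which are purely algebraic. Unfolding the inner product of \textit{Definition~\ref{Def22}} gives $\langle\langle f^i, (-1)^s \Hodge e^i\rangle\rangle = (-1)^s \int_{\Z} f^i \wedge \Hodge\Hodge e^i$. Since $e^i$ is an $(n-i)$-form, the Hodge involution yields $\Hodge\Hodge e^i = (-1)^{i(n-i)} e^i$, so with the prescribed $s = i(n-i)$ the two signs cancel and we are left with $\int_{\Z} f^i \wedge e^i$. Substituting the Stokes--Dirac relation $f^i = d e^j$ from (\ref{SD}) (in the companion pairing this additionally carries the structural factor $(-1)^r$ from the definition of the structure, which one must retain) produces $\int_{\Z} de^j \wedge e^i$, and the further substitution $e^i = \Hodge f^i$, again from (\ref{SD}), gives $\int_{\Z} de^j \wedge \Hodge f^i$.

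Next I would treat the third equality, which is the genuine integration-by-parts step and where the boundary term first appears. Expanding by the Leibniz rule, $d(e^j \wedge \Hodge f^i) = de^j \wedge \Hodge f^i + (-1)^{i-1} e^j \wedge d\Hodge f^i$ because $e^j$ has degree $i-1$; rearranging and integrating over $\Z$, \textit{Theorem~\ref{Stokes}} turns the exact term into a boundary integral, while \textit{Proposition~\ref{Prop21}} identifies the pullback as $\iota^\ast(e^j \wedge \Hodge f^i) = \tangent e^j \wedge \tangent \Hodge f^i$. This yields exactly $(-1)^i \int_{\Z} e^j \wedge d\Hodge f^i + \int_{\pd \Z} \tangent e^j \wedge \tangent \Hodge f^i$. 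Equivalently one may quote \textit{Theorem~\ref{Green}} directly with $\omega = e^j$ and $\eta = f^i$, after checking the identity $\Hodge \delta f^i = (-1)^i d\Hodge f^i$ which relates its bulk term $e^j \wedge \Hodge\delta f^i$ to $(-1)^i e^j\wedge d\Hodge f^i$.

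Finally, the fourth equality merely restates the boundary integrand: by the Hodge duality of tangential and normal parts (\textit{Proposition~\ref{Prop1}}), $\tangent \Hodge f^i = \Hodge \normal f^i$, whence $\int_{\pd \Z} \tangent e^j \wedge \tangent \Hodge f^i = \int_{\pd \Z} \tangent e^j \wedge \Hodge \normal f^i$. None of the individual steps is deep; the main obstacle is the sign bookkeeping. The two delicate points are verifying that the Leibniz sign $(-1)^{i-1}$ becomes the stated $(-1)^i$ once the exact term is moved across the equation (equivalently, establishing $\Hodge\delta f^i = (-1)^i d\Hodge f^i$ from $\delta = (-1)^{n k + n + 1}\Hodge d\Hodge$ together with $\Hodge\Hodge = (-1)^{k(n-k)}$), and confirming that the $\Hodge\Hodge$ sign in the first equality is precisely cancelled by the prescribed $s = i(n-i) = i(j-1)$. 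Both reduce to the relation $i + j = n + 1$, which I would record at the outset and apply uniformly.
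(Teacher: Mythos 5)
Your proposal is correct and takes essentially the same route as the paper's own (one-line) proof: integration by parts via the Green formula (\textit{Theorem~\ref{Green}}), or equivalently Leibniz plus Stokes, combined with the pullback identification of \textit{Proposition~\ref{Prop21}} for the boundary term and the duality $\tangent \Hodge f^i = \Hodge \normal f^i$ of \textit{Proposition~\ref{Prop1}} for the final equality. You additionally supply the sign bookkeeping the paper leaves implicit (the $\Hodge\Hodge$ involution cancelling $(-1)^s$ with $s = i(n-i)$, and the identity $\Hodge \delta f^i = (-1)^i d \Hodge f^i$), and you are right to flag that for $\{i,j\} = \{p,q\}$ the first equality carries the structural factor $(-1)^r$ from (\ref{SD}), which the stated chain suppresses.
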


\begin{proof}
 We have used the relation in \textit{Proposition\;\ref{Prop21}}.
\end{proof}



\begin{theorem}
 The boundary term in the power balance equation (\ref{PBeq1}) is extended that on a $\pd$-manifold $M$ as follows:
 \begin{align}
  \int_{\pd \Z} e^b \wedge f^b = \int_{\pd M} \tangent e^j \wedge \tangent \Hodge d \alpha^i +
\int_{\pd M} \tangent e^j \wedge \Hodge \normal \lambda^i, \label{PBeq2}
 \end{align}
 where $d\alpha^i \in \mcal{E}^k(M)$, and $\lambda^i \in \mcal{H}^k_\ST(M)$.
\end{theorem}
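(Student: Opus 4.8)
The plan is to trace the boundary term of (\ref{PBeq1}) through the inner-product identity (\ref{IPair1}) and then substitute the Hodge--Morrey--Friedrichs splitting of the flow. First I would use the boundary-port definitions $f_b = e^p_D|_{\pd\Z}$, $e_b=(-1)^p e^q_D|_{\pd\Z}$ together with the constitutive relations $e^i=\Hodge f^i$ of (\ref{SD}) to identify the boundary pairing $\int_{\pd\Z} e^b\wedge f^b$ with the boundary integral $\int_{\pd M}\tangent e^j\wedge\Hodge\normal f^i$ furnished by the last line of (\ref{IPair1}). In this way the whole task reduces to evaluating the normal trace $\Hodge\normal f^i$ along $\pd M$, degree by degree for $\{i,j\}\in\{\{p,q\},\{q,p\}\}$.

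Next I would insert the unique decomposition $f^i = d\alpha^i + \delta\beta^i + \delta\gamma^i + \lambda^i$ of Lemma \ref{HMFdcomp1} and treat the four summands separately with Proposition \ref{Prop1}. For the exact part the Hodge duality of traces gives $\Hodge\normal(d\alpha^i)=\tangent\Hodge(d\alpha^i)$, which is exactly the first term of (\ref{PBeq2}). For the Dirichlet-harmonic part, $\lambda^i\in\mcal{H}^k_\ST(M)$ satisfies $\tangent\lambda^i=0$, so its boundary restriction is purely normal and it passes through unchanged as $\Hodge\normal\lambda^i$, the second term of (\ref{PBeq2}). It then remains to show that the two co-exact summands make no boundary contribution. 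The part $\delta\beta^i\in\mcal{C}^k(M)$ is disposed of immediately: its potential obeys $\normal\beta^i=0$ by the definition of $\mcal{C}^k(M)$, so the commutation $\normal\delta=\delta\normal$ of Proposition \ref{Prop1} yields $\normal(\delta\beta^i)=\delta(\normal\beta^i)=0$ and hence $\Hodge\normal(\delta\beta^i)=0$.

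The step I expect to be the \emph{main obstacle} is the co-exact harmonic component $\delta\gamma^i\in\mcal{H}^k_{C}(M)$, whose potential carries no prescribed boundary condition, so that $\normal(\delta\gamma^i)$ need not vanish pointwise. I would attack it by Hodge duality combined with integration by parts on the closed hypersurface $\pd M$: writing $\Hodge\normal(\delta\gamma^i)=\tangent\Hodge(\delta\gamma^i)$ and using that $\Hodge$ carries co-exact harmonic fields to exact harmonic ones, so that $\tangent\Hodge(\delta\gamma^i)=d(\tangent\zeta^i)$ for a suitable $\zeta^i$, Stokes' theorem on $\pd M$ (whose boundary is empty) transfers $d$ onto $\tangent e^j$ and rewrites the integrand through the structural identity $de^j=\pm f^i$ of (\ref{SD}). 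The delicate point is that this transfer reproduces tangential traces of the co-exact pieces rather than closing to zero outright, so the cancellation must be extracted from harmonicity and the $L^2$-orthogonality built into the Hodge--Morrey--Friedrichs splitting; establishing this vanishing rigorously, rather than merely imposing $\normal\gamma^i=0$ by analogy with $\delta\beta^i$, is where the argument demands the most care. Once the $\delta\gamma^i$ contribution is shown to drop, collecting the surviving summands over the two index pairs yields exactly (\ref{PBeq2}).
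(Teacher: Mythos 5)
Your overall route coincides with the paper's: both reduce the boundary pairing to $\int_{\pd M}\tangent e^j\wedge\Hodge\normal f^i$ via (\ref{IPair1}), insert the Hodge--Morrey--Friedrichs splitting of \textit{Lemma\;\ref{HMFdcomp1}}, convert the exact part by $\Hodge\normal(d\alpha^i)=\tangent\Hodge(d\alpha^i)$ (\textit{Proposition\;\ref{Prop1}}), and let the Dirichlet field $\lambda^i$ pass through; your disposal of $\delta\beta^i$ through $\normal(\delta\beta^i)=\delta(\normal\beta^i)=0$ is a valid (trace-level) variant of the paper's bulk argument. The genuine gap is exactly the one you flag: the co-exact harmonic component $\delta\gamma^i\in\mcal{H}^k_{C}(M)$ is never shown to drop out, and the attack you sketch---Hodge duality plus integration by parts on the closed hypersurface $\pd M$---cannot succeed even in principle. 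Applying the paper's Green formula (\textit{Theorem\;\ref{Green}}) with $\omega=e^j$, $\eta=\delta\gamma^i$ and $\delta\delta=0$, and then the mutual $L^2$-orthogonality of the four summands guaranteed by \textit{Theorems\;\ref{HMDcomp} and \ref{FDcomp}},
\begin{align}
\int_{\pd M}\tangent e^j\wedge\Hodge\normal(\delta\gamma^i)
&=\langle\langle de^j,\,\delta\gamma^i\rangle\rangle
=\pm\langle\langle f^i,\,\delta\gamma^i\rangle\rangle \nt \\
&=\pm\langle\langle \delta\gamma^i,\,\delta\gamma^i\rangle\rangle ,
\end{align}
since $\delta\gamma^i$ is orthogonal to $d\alpha^i$, $\delta\beta^i$ and $\lambda^i$. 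The boundary contribution you hope to cancel is therefore (up to sign) the squared $L^2$-norm of the component itself: it vanishes if and only if $\delta\gamma^i=0$ identically in the interior. No rearrangement of boundary integrals, harmonicity, or Stokes on $\pd M$ can make it disappear otherwise; the term must be killed in the bulk.

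That bulk argument is precisely the paper's one-line proof, which your proposal does not reproduce: since $f^p=(-1)^r de^q$ and $f^q=de^p$ in (\ref{SD}), the flow $f^i$ is closed, and the paper concludes $\delta\beta^i=\delta\gamma^i=0$, leaving $f^i=d\alpha^i+\lambda^i$ with the identification $\alpha^i=e^j$; substituting this into (\ref{IPair1}) and summing over the two index pairs yields (\ref{PBeq2}). (For what it is worth, your suspicion that this step is delicate is well founded: closedness alone kills $\delta\beta^i$, because closed forms are $L^2$-orthogonal to $\mcal{C}^k(M)$ by Green's formula with $\normal\beta^i=0$, but elements of $\mcal{H}^k_{C}(M)$ are themselves closed, so the vanishing of $\delta\gamma^i$ really rests on the exactness $f^i=\pm\,de^j$ together with the identification $\alpha^i=e^j$, i.e., on a condition tied to the potential $e^j$ rather than on $df^i=0$.) As it stands, your proposal establishes (\ref{PBeq2}) only modulo the unproven vanishing of $\delta\gamma^i$, so it is incomplete at the theorem's crux.
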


\begin{proof}
Because $f^i$ is closed: $df^i = 0$, $\delta \beta^i = \delta \gamma^i = 0$ in the form $f^i = d \alpha^i + \delta \beta^i + \delta \gamma^i + \lambda^i$ obtained from the decomposition (\ref{HMFdcomp1}).
 The formula (\ref{IPair1}) admits the expression $f^i = d \alpha^i + \lambda^i$.
 Indeed, $\alpha^i = e^j$.
 The sum of two appropriate inner products (\ref{IPair1}) yield the boundary term in the last equation in the same manner of the proof of the standard Stokes-Dirac structure~\cite{j_ASch1}.
\end{proof}


 Then, the first term of the right-side in (\ref{PBeq2}) corresponds to the conventional boundary energy flow, the second is the new boundary energy related with the topology of $M$.
 This result is justified as following known fact.


\begin{lemma}[{\cite[p. 127]{b_GSch1}}]\label{Lem52}
 Let $M$ be a $\pd$-manifold. 
 Consider the problem of finding a solution $e^j \in \varOmega^{k-1}(M)$ of the equations
\begin{align}
 f^i = d e^j \ \textrm{\ on\ } M, \qquad e^j|_{\pd M} = \psi|_{\pd M} \ \textrm{\ on\ } \pd M
\end{align}
for given $f^i \in \varOmega^{k}(M)$ and $\psi \in \varOmega^{k-1}(M)|_{\pd M}$.
 This problem is solvable, if and only if 
$f^i$ and $\psi$ satisfy the integrability condition
 \begin{align}
  d f^i &= 0, \quad \tangent f^i = \tangent d \psi, \\
  \langle \langle f^i, \lambda^i \rangle \rangle &= \int_{\pd M} \tangent \psi \wedge \Hodge \normal \lambda^i \quad \forall \lambda^i \in \mcal{H}^{k}_\ST(M). \label{GPintA}
 \end{align}
\end{lemma}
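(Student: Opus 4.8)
The plan is to prove the equivalence in both directions, handling necessity by a direct computation with Green formula and sufficiency by an existence argument built on the Hodge-Morrey and Friedrichs decompositions. For necessity, I would assume a solution $e^j$ with $d e^j = f^i$ and $\tangent e^j = \tangent\psi$ (the boundary datum identified with the pullback via \textit{Proposition\;\ref{Prop21}}). Then $d f^i = d d e^j = 0$ is immediate, and $\tangent f^i = \tangent d e^j = d\,\tangent e^j = d\,\tangent\psi = \tangent d\psi$ follows from the commutation $\tangent d = d\,\tangent$ in \textit{Proposition\;\ref{Prop1}}. For the last condition I would fix $\lambda^i \in \mcal{H}^k_\ST(M)$ and apply Green formula (\textit{Theorem\;\ref{Green}}) with $\omega = e^j$, $\eta = \lambda^i$: since $\lambda^i$ is a harmonic field, $\delta\lambda^i = 0$ annihilates the interior term, and $\tangent e^j = \tangent\psi$ turns the boundary term into $\int_{\pd M}\tangent\psi\wedge\Hodge\normal\lambda^i$, which is exactly (\ref{GPintA}).

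For sufficiency I would first reduce to homogeneous boundary data. Picking any extension $\tilde\psi \in H^1\varOmega^{k-1}(M)$ of the boundary form (so $\tangent\tilde\psi = \tangent\psi$, whose existence and regularity follow from the standard Sobolev trace/extension theorem on $\pd$-manifolds), I set $g := f^i - d\tilde\psi$. The hypotheses $d f^i = 0$ and $\tangent f^i = \tangent d\psi$ then give $d g = 0$ and $\tangent g = \tangent f^i - \tangent d\tilde\psi = 0$. It therefore suffices to produce $\alpha \in H^1\varOmega^{k-1}_\ST(M)$ with $d\alpha = g$, for then $e^j := \tilde\psi + \alpha$ satisfies $d e^j = f^i$ and $\tangent e^j = \tangent\psi$. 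Equivalently, I must show $g \in \mcal{E}^k(M)$.

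To do this I would apply the Hodge-Morrey decomposition (\textit{Theorem\;\ref{HMDcomp}}), writing $g = d\alpha + \delta\beta + \kappa$ with $d\alpha \in \mcal{E}^k(M)$, $\delta\beta \in \mcal{C}^k(M)$, $\kappa \in L^2\mcal{H}^k(M)$, and show the last two summands vanish. Pairing $g$ against $\delta\beta$ and using Green formula with $d g = 0$, $\tangent g = 0$, and $\normal\beta = 0$ (as $\beta \in H^1\varOmega^{k+1}_\SN(M)$) gives $\|\delta\beta\|^2 = \langle\langle g,\delta\beta\rangle\rangle = 0$, so $\delta\beta = 0$. For $\kappa$ I would combine both remaining hypotheses. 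On one hand, $\tangent g = 0$ and $\tangent d\alpha = d\,\tangent\alpha = 0$ force $\tangent\kappa = 0$, i.e. $\kappa \in \mcal{H}^k_\ST(M)$. On the other hand, for every $\lambda \in \mcal{H}^k_\ST(M)$, Green formula applied to $d\tilde\psi$ (with $\delta\lambda = 0$) gives $\langle\langle d\tilde\psi,\lambda\rangle\rangle = \int_{\pd M}\tangent\psi\wedge\Hodge\normal\lambda$, so (\ref{GPintA}) yields $\langle\langle g,\lambda\rangle\rangle = \langle\langle f^i,\lambda\rangle\rangle - \langle\langle d\tilde\psi,\lambda\rangle\rangle = 0$; by the $L^2$-orthogonality of the decomposition this equals $\langle\langle\kappa,\lambda\rangle\rangle$, so $\kappa \perp \mcal{H}^k_\ST(M)$. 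Choosing $\lambda = \kappa$ gives $\|\kappa\|^2 = 0$, whence $g = d\alpha \in \mcal{E}^k(M)$, completing the construction of $e^j$.

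The hard part will be the elimination of the harmonic component $\kappa$ in the sufficiency direction: the Hodge-Morrey decomposition isolates only the full harmonic field, and neither hypothesis alone suffices to remove it. The Friedrichs splitting $\mcal{H}^k(M) = \mcal{H}^k_\ST(M)\varoplus\mcal{H}^k_C(M)$ (\textit{Theorem\;\ref{FDcomp}}) is the conceptual key here: the tangential-vanishing condition pins $\kappa$ inside $\mcal{H}^k_\ST(M)$, while the pairing condition (\ref{GPintA}) forces $\kappa$ into its orthogonal complement, and it is precisely the triviality of that intersection that yields $\kappa = 0$. This interplay explains why \emph{both} the pointwise tangential condition and the global orthogonality relation against Dirichlet fields are needed in the integrability statement.
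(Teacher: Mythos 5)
There is nothing to compare your proof against: the paper does not prove \textit{Lemma\;\ref{Lem52}} at all, but imports it verbatim as a known result from \cite[p.~127]{b_GSch1}, using it only afterwards to derive the harmonic boundary energy flows. Judged on its own merits, your argument is correct and is essentially a reconstruction of the proof in the cited monograph. The necessity direction (two applications of $d\circ d=0$ and $\tangent d = d\,\tangent$ from \textit{Proposition\;\ref{Prop1}}, plus one use of Green's formula \textit{Theorem\;\ref{Green}} with $\delta\lambda^i=0$) is exactly the standard computation, and your reading of the boundary condition $e^j|_{\pd M}=\psi|_{\pd M}$ as a condition on tangential parts via \textit{Proposition\;\ref{Prop21}} is the right one --- with the full trace prescribed the stated conditions would not be sufficient. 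The sufficiency direction (extend $\psi$, pass to $g=f^i-d\tilde\psi$, apply \textit{Theorem\;\ref{HMDcomp}}, kill $\delta\beta$ by self-pairing, and kill $\kappa$ by showing it lies in $\mcal{H}^k_\ST(M)$ while (\ref{GPintA}) forces it orthogonal to $\mcal{H}^k_\ST(M)$) is the mechanism used in the source. Two small points of fine print: first, your closing paragraph credits the Friedrichs decomposition (\textit{Theorem\;\ref{FDcomp}}) as the key, but your actual argument never invokes it --- the self-pairing $\lambda=\kappa$ is all that is needed once $\kappa\in\mcal{H}^k_\ST(M)$ is known, so that remark is a conceptual gloss rather than a load-bearing step; second, the steps where you take the tangential trace of the $L^2$-harmonic component $\kappa\in L^2\mcal{H}^k(M)$ (which a priori has only a distributional trace) and where you upgrade the solution $e^j=\tilde\psi+\alpha\in H^1\varOmega^{k-1}(M)$ to a smooth form in $\varOmega^{k-1}(M)$ both require the elliptic regularity results of \cite{b_GSch1}; these are standard but genuinely needed to make the argument rigorous at the stated level of smoothness.
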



\begin{proposition}
 Consider the Stokes-Dirac structure on a $\pd$-manifold $\Z$ for $f^p \in \varOmega^{p}(\Z)$, $e^p \in \varOmega^{q-1}(\Z)$, $f^q \in \varOmega^{q}(\Z)$, and $e^q \in \varOmega^{p-1}(\Z)$. Then, there exists the harmonic forms $\lambda^p \in \mcal{H}^{p}_\ST(\Z)$ and $\lambda^q \in \mcal{H}^{q}_\ST(\Z)$ satisfying
 \begin{align}
  \tangent f^p &= (-1)^r d \tangent e^q, & 
  \langle \langle f^p, \lambda^p \rangle \rangle &= \int_{\pd M} \tangent e^q \wedge \Hodge \normal \lambda^p, \label{GPintA2} \\
  \tangent f^q &= d \tangent e^p, & 
  \langle \langle f^q, \lambda^q \rangle \rangle &= \int_{\pd M} \tangent e^p \wedge \Hodge \normal \lambda^q. \label{GPintB2}
 \end{align}
 We call the boundary integrations (\ref{GPintA2}) and (\ref{GPintB2}) \emph{harmonic boundary energy flows}, and $\lambda^p$ and $\lambda^q$ \emph{harmonic boundary energy variables}.
\end{proposition}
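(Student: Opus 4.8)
The plan is to read each of the two exact-form relations in the Stokes-Dirac structure (\ref{SD}) as an instance of the potential boundary value problem treated in Lemma \ref{Lem52}, and then to invoke the necessity (``only if'') direction of that lemma. Concretely, (\ref{SD}) supplies $f^p = (-1)^r\, d e^q$ with $e^q \in \varOmega^{p-1}(\Z)$ and $f^q = d e^p$ with $e^p \in \varOmega^{q-1}(\Z)$. For the second relation I would take $k = q$, identify $f^i = f^q$ and $e^j = e^p$, and read the boundary datum as the trace $\psi = e^p$, which by Proposition \ref{Prop21} is encoded entirely by its tangential part $\tangent e^p$. Since the Stokes-Dirac structure already exhibits $e^p$ as a genuine solution of $f^q = d e^p$ with that trace, the associated problem is solvable; hence by Lemma \ref{Lem52} its integrability conditions must hold.

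The first integrability condition $\tangent f^i = \tangent d \psi$, combined with the commutation $\tangent d = d\tangent$ from Proposition \ref{Prop1}, then gives the tangential compatibility $\tangent f^q = d\tangent e^p$, i.e.\ the left equation of (\ref{GPintB2}). The second, harmonic condition of Lemma \ref{Lem52}, namely $\langle\langle f^i, \lambda^i\rangle\rangle = \int_{\pd M}\tangent\psi \wedge \Hodge\normal\lambda^i$ for all $\lambda^i \in \mcal{H}^k_\ST(\Z)$, becomes $\langle\langle f^q, \lambda^q\rangle\rangle = \int_{\pd M}\tangent e^p \wedge \Hodge\normal\lambda^q$ for every Dirichlet field $\lambda^q \in \mcal{H}^q_\ST(\Z)$, which is a well-defined space by Definition \ref{DNfields}; this is the right equation of (\ref{GPintB2}) and a fortiori yields the asserted existence.

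The first relation is handled in exactly the same way after absorbing the sign. Since $(-1)^r$ is an involution, $f^p = (-1)^r d e^q$ is equivalent to $(-1)^r f^p = d e^q$, so I would apply Lemma \ref{Lem52} with $k = p$, $f^i = (-1)^r f^p$, $e^j = e^q$, and $\psi = e^q$. The tangential condition then reads $(-1)^r \tangent f^p = d\tangent e^q$, i.e.\ $\tangent f^p = (-1)^r d\tangent e^q$, and the harmonic condition produces the pairing identity of (\ref{GPintA2}) over $\lambda^p \in \mcal{H}^p_\ST(\Z)$, precisely the integrability relation (\ref{GPintA}) of Lemma \ref{Lem52}.

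Because the whole argument is a double application of Lemma \ref{Lem52}, the work is bookkeeping rather than analysis: one must keep the degrees consistent (so that $f^p \in \varOmega^p(\Z)$ pairs with $\lambda^p \in \mcal{H}^p_\ST(\Z)$ while the datum $e^q$ lives in $\varOmega^{p-1}(\Z)$, and symmetrically for the $q$-channel), and be explicit that it is the necessity direction of the lemma that is used, the solution being furnished by the Stokes-Dirac structure itself. The one genuinely delicate point, and the step I would present most carefully, is the orientation sign $(-1)^r$ in the $p$-channel: tracking it through the substitution $f^i = (-1)^r f^p$ shows exactly where it is absorbed into the normalization of the harmonic boundary energy variable $\lambda^p$ so that (\ref{GPintA2}) takes the stated form, whereas the $q$-channel carries no such factor.
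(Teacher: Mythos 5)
Your proposal is correct and is essentially the paper's own proof: the paper likewise substitutes the free boundary trace $\psi = e^j$ (the ``free boundary condition'') into Lemma~\ref{Lem52} and uses its necessity direction, the solvability being witnessed by the Stokes-Dirac structure itself, exactly as you argue channel by channel. One caveat on your final point: since both sides of the harmonic pairing identity are linear in $\lambda^p$, the factor $(-1)^r$ cannot literally be absorbed by renormalizing $\lambda^p$ --- it either persists in (\ref{GPintA2}) (a sign the paper's statement and one-line proof also gloss over) or, when $r$ is odd, forces both sides to vanish for any $\lambda^p$ satisfying the stated identity; this does not endanger the bare existence claim, but it should be stated as such rather than as an absorption into $\lambda^p$.
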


\begin{proof}
 By substituting $\phi = e^j$ describing the free boundary condition to \textit{Lemma\;\ref{Lem52}}, we can get the results.
\end{proof}


\begin{theorem}
 The cohomology $\HSD(\Z)$ of the Stokes-Dirac structure on a $\pd$-manifold $\Z$ is affected by the topology of $\Z$.
\end{theorem}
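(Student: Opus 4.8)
The plan is to trace $\HSD(\Z)$ through a chain of canonical isomorphisms terminating in the singular homology of $\Z$, which is a topological invariant by \textit{Remark\;\ref{Rem41}}. Since the preceding Proposition already identifies $\HSD(\Z)$ with the quadruple of harmonic field spaces $\{ \mcal{H}^q_\SN(\Z), \mcal{H}^p_\ST(\Z), \mcal{H}^p_\SN(\Z), \mcal{H}^q_\ST(\Z) \}$, it suffices to show that each Dirichlet or Neumann field space is governed by the topology of $\Z$.

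First I would invoke the Hodge isomorphism (\textit{Theorem\;\ref{HodgeIso}}) to rewrite the Neumann fields as $\mcal{H}^k_\SN(\Z) \cong \HDRed{k}{\Z}$ and the Dirichlet fields as $\mcal{H}^k_\ST(\Z) \cong \HDRde{k}{\Z}$, so that every factor of $\HSD(\Z)$ is expressed through the de Rham cohomology of the complexes $(\varOmega(\Z), d)$ and $(\varOmega(\Z), \delta)$. Next, by the De Rham theorem recalled in \textit{Remark\;\ref{Rem41}}, each $\HDRed{k}{\Z}$ is isomorphic to the singular cohomology $\HDR^k(\Z)$, which by the stated duality is the dual of the singular homology $H_k(\Z)$; the latter arises from a triangulation of $\Z$ and therefore encodes its topology directly. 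Consequently the dimension of each factor of $\HSD(\Z)$ is a Betti number $\beta_k = \dim H_k(\Z)$, a topological invariant, and the alternating sum of these dimensions reproduces the Euler characteristic $\xind(\Z) = \sum_{i=0}^n (-1)^i \beta_i$.

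The hard part will be the bookkeeping at the boundary rather than any single isomorphism: the Neumann and Dirichlet fields correspond respectively to the absolute and relative de Rham cohomologies of the pair $(\Z, \pd \Z)$, so the degree shifts $q-1 = n-p$ and $p-1 = n-q$ used in the preceding Proposition must be reconciled with \textit{Corollary\;\ref{HodgeIsoCor1}} (the Poincar\'{e}--Lefschetz duality) in order to refer all four factors to a single body of homological data for $\Z$. Once this is settled the conclusion is immediate: if $\Z$ is contractible then every positive-degree term vanishes and $\HSD(\Z)$ collapses to its degree-zero part, recovering the classical boundary-contractible Stokes-Dirac structure with no harmonic contribution, whereas a non-trivial topology forces at least one harmonic field space to be non-zero and thereby enlarges $\HSD(\Z)$. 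This shows that $\HSD(\Z)$ is determined by the topology of $\Z$, which is the assertion of the theorem.
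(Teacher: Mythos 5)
Your proposal is correct and takes essentially the same route as the paper's own proof: both identify the factors of $\HSD(\Z)$ with harmonic field spaces, then pass through the Hodge isomorphism (Theorem~\ref{HodgeIso}), the duality of Corollary~\ref{HodgeIsoCor1}, and the de Rham theorem of Remark~\ref{Rem41} to singular homology (with the Dirichlet fields handled via relative homology / Lefschetz duality), which encodes the topology of $\Z$. One inessential slip: your claim that the alternating sum of the dimensions of the four factors reproduces the Euler characteristic $\xind(\Z) = \sum_{i=0}^n (-1)^i \beta_i$ is an overstatement, since $\HSD(\Z)$ involves cohomology only in degrees $p$ and $q$, not in all degrees from $0$ to $n$.
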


\begin{proof}
 By the Hodge isomorphism \textit{Theorem\;\ref{HodgeIso}}, $\HDR^k(M,\delta) \cong \mcal{H}^k_\ST(M)$.
Moreover, by \textit{Corollary\;\ref{HodgeIsoCor1}}, we have $\HDR^k(M,d) \cong \HDR^{n-k}(M,\delta)$.
The de Rham cohomology $\HDR^k(M,d)$ is related with the singular homology group representing the topology of $\Z$.
On the other hand, it is known that $\HDR^k(M,\delta)$ is isomorphic to the relative de Rham cohomology and the relative homology~\cite{b_GSch1, b_GNis1} that is an alternate homological classification of manifolds with boundary. Indeed, because $\HDR^k(M,d) \cong \HDR^{n-k}(M,\delta)$ as we have seen, the same result can be obtained from the both.
\end{proof}

\section{Examples}\label{Sec5}


\memo{
\begin{figure}[h]
 \mbox{
 \begin{minipage}[b][30mm][c]{0.95\hsize}
  \centering
  \includegraphics[width=0.3\columnwidth,clip]{./fig/bloodflow1.eps}
 \end{minipage}
 }
  \caption{Blood flow~\cite{x}}
  \label{fig2}
\end{figure}
} 

The homology of $M$ can be detected by harmonic forms as we have seen before.
From the viewpoint of topology, the homology group $H_i(M)$ can be interpreted, e.g., in the case $n=3$ as follows:
\begin{itemize}
\item $H_0(M)$ $\cdots$ The vector space generated by equivalence classes of points in $M$ such that two points are equivalent if there exists a path connecting the points in $M$. 
$\dim H_0(M)$ is equivalent to the number of components of $M$. 
Note that $H_0(M) \cong \mbb{R}$ if $M$ is connected, then the element of $H_0(M)$ is a constant function.
\item $H_1(M)$ $\cdots$ The vector space generated by equivalence classes of oriented loops in $M$ such that two loops are equivalent if their difference is the boundary of an oriented surface in $M$. $\dim H_1(M)$ is equivalent to the number of total genus of $\pd M$, where a genus means the number of holes of closed surfaces.
\item $H_2(M)$ $\cdots$ The vector space generated by equivalence surfaces in $M$ such that two surfaces are equivalent if their difference is the boundary of some oriented subregion of $M$.
$\dim H_2(M)$ is equivalent to the number of the difference between components of $\pd M$ and those of $M$.
\item $H_3(M)$ $\cdots$ $\dim H_3(M)$ is always $0$.
\end{itemize}

\memo{
On the other hand, the dual space of $H_{k}(M)$ is $H_{n-k}(M,\pd M)$, where $H_{k}(M,\pd M)$ is called \emph{the $k$-th relative homology of $Z$ modulo $\pd M$}.
In $n = 3$, the relative homology of $Z$ modulo $\pd M$ consists of the following vector spaces with real coefficients:
\begin{itemize}
\item $H_0(M,\pd M)$ $\cdots$ $\dim H_0(M)$ is always $0$. 
\item $H_1(M,\pd M)$ $\cdots$ The vector space is generated by such equivalence classes of oriented paths whose endpoints lie on $\pd M$ as two such paths are equivalent if their difference (possibly paths on $\pd M$) is the boundary of an oriented surface in $M$. 
\item $H_2(M,\pd M)$ $\cdots$ The vector space is generated by such equivalence classes of oriented surface whose boundaries lie on $\pd M$ as two such surfaces are equivalent if their difference (possibly portions of $\pd M$) is the boundary of some oriented subregion of $M$. 
\item $H_3(M,\pd M)$ $\cdots$ The vector space has the oriented components of $M$ as a basis. Thus, $\dim H_3(M,\pd M)$ is the number of components of the subregions of $M$ whose boundaries lie on $\pd M$. Note that $H_3(M,\pd M) \cong \mbb{R}$ for a connected $M$ and the element of $H_3(M,\pd M)$ is a constant function.
\end{itemize}
Hence, $H_k(M,\pd M) \cong H_{3-k}(M)$ for $0 \leq k \leq 3$.
} 

Because vector fields can be identified with $1$-forms on manifolds, the above decomposition of differential forms affects the vector fields.


\begin{theorem}[~\cite{j_JCan1}~]
Let $M$ be a compact domain with a smooth boundary $\pd M$ in three-dimensional space.
Let $\VF(M)$ be the infinite dimensional vector space of all vector fields in $M$.
Consider $L^2$ inner product $\langle \GVS, \GWS \rangle = \int_M \GVS \cdot \GWS\,dx$ for any $\GVS, \GWS \in \VF(M)$, where $dx$ is the volume form on $M$. 
The space $\VF(M)$ is the direct sum of the following mutually orthogonal subspaces:
\begin{align}
 \VF(M) &= \VK(M) \varoplus \VG(M),
\end{align}
where $\GVS \in \VF(M)$, $\varphi \in C^\infty(M)$, 
\begin{align}
 \VK(M) &= \left\{ \GVS \in \VF(M)\mid \Div \GVS = 0,\ \langle \GVS, \vl{n} \rangle = 0
\right\}, \\
 \VG(M) &= \left\{ \GVS \in \VF(M)\mid \GVS = \Grad \varphi \right\},
\end{align}
which are called \emph{knots} and \emph{gradients}, respectively, and $\vl{n}$ means unit vector fields normal to $\pd M$.
Furthermore, the subspaces
%
%
%
\begin{align}
 \HK(M) &= \left\{ \GVS \in \VK(M)\mid \Curl \GV = \vl{0} \right\}, \\
 \HG(M) &= \left\{ \GVS \in \VG(M)\mid \Div \GVS = 0,\ \right. \nt \\
 &\quad \left. \varphi \text{ is locally constant on } \pd M \right\}, 
\end{align}
which are respectively called 
\emph{harmonic knots} and
\emph{harmonic gradients},
directly relate to the topology of $M$ as follows:
\begin{align}
 \dim H_1(M) &= \dim \HK(M), \quad \dim H_2(M) = \dim \HG(M).
\end{align}

\end{theorem}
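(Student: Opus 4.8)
The plan is to pass from vector fields to differential $1$-forms via the metric and then read off everything from the Hodge-type decompositions already established. First I would fix the musical isomorphism $\GVS \mapsto \GVS^\flat$ sending a field $\GVS \in \VF(M)$ to the $1$-form $\omega = \GVS^\flat$ on the $3$-dimensional $\pd$-manifold $M$. Under this identification one has $\Div \GVS = 0 \Leftrightarrow \delta\omega = 0$, $\Curl \GVS = \vl{0} \Leftrightarrow d\omega = 0$, $\GVS = \Grad\varphi \Leftrightarrow \omega = d\varphi$, and $\langle \GVS, \normal\rangle = 0 \Leftrightarrow \normal\omega = 0$, so that $\VK(M)$ becomes the co-closed $1$-forms with vanishing normal part, $\VG(M)$ the exact $1$-forms, $\HK(M) = \mcal{H}^1_\SN(M)$, and $\HG(M) = \{\, d\varphi \mid \delta d\varphi = 0,\ \varphi \text{ locally constant on } \pd M \,\} \subseteq \mcal{H}^1_\ST(M)$. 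The whole statement then reduces to Hodge theory on $M$.

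For the orthogonal splitting $\VF(M) = \VK(M) \oplus \VG(M)$ I would argue as a Helmholtz decomposition. Orthogonality is immediate from Green's formula (\textit{Theorem\;\ref{Green}}): applying it with the $0$-form $\varphi$ and the $1$-form $\omega$ gives $\langle\langle d\varphi, \omega\rangle\rangle = \int_M \varphi \wedge \Hodge\,\delta\omega + \int_{\pd M} \tangent\varphi \wedge \Hodge\,\normal\omega$, and both terms vanish exactly when $\delta\omega = 0$ and $\normal\omega = 0$, i.e. when $\omega \in \VK(M)$; running the same computation over all $\varphi$ shows $\VK(M) = \VG(M)^\perp$. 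Completeness---that every field actually splits---is the analytic step: given $\GVS$ with dual $\omega$, I would solve the Neumann problem $\Delta\varphi = \delta\omega$ with $\normal\,d\varphi = \normal\omega$ on $\pd M$, whose compatibility condition $\int_M \Div\GVS = \int_{\pd M}\langle\GVS,\normal\rangle$ holds by the divergence theorem (\textit{Theorem\;\ref{Stokes}}); then $\GVS - \Grad\varphi$ is divergence-free and tangent, so lies in $\VK(M)$, giving $\GVS = (\GVS-\Grad\varphi)+\Grad\varphi$.

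The topological identities follow from the cohomological machinery. For the knots, the equality $\HK(M) = \mcal{H}^1_\SN(M)$ together with the Hodge isomorphism (\textit{Theorem\;\ref{HodgeIso}}) gives $\HK(M) \cong \HDRed{1}{M}$, and by the de Rham theorem and the duality $H^1(M) \cong (H_1(M))^\ast$ of \textit{Remark\;\ref{Rem41}} one gets $\dim\HK(M) = \dim H_1(M)$. For the harmonic gradients, $\HG(M) \subseteq \mcal{H}^1_\ST(M)$, and the Hodge isomorphism followed by \textit{Corollary\;\ref{HodgeIsoCor1}} yields $\mcal{H}^1_\ST(M) \cong \HDRde{1}{M} \cong \HDRed{2}{M}$ (using $n=3$), so $\dim\mcal{H}^1_\ST(M) = \dim H_2(M)$. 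It then remains to check $\HG(M) = \mcal{H}^1_\ST(M)$, which I would verify through the Dirichlet problem: a harmonic $\varphi$ that is locally constant on $\pd M$ is uniquely determined by the constants it assumes on the boundary components, so, modulo the overall additive constant annihilated by $\Grad$, the space $\HG(M)$ has dimension equal to the number of connected components of $\pd M$ minus the number of connected components of $M$, which is precisely $\dim H_2(M)$ as recorded in the $H_2$ discussion above; matching this with $\HG(M)\subseteq\mcal{H}^1_\ST(M)$ forces equality.

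The main obstacle is analytic rather than formal. Everything downstream of the form-level translation is a direct application of \textit{Theorem\;\ref{HodgeIso}}, the Friedrichs and Hodge--Morrey decompositions (\textit{Theorems\;\ref{HMDcomp} and \ref{FDcomp}}), and \textit{Remark\;\ref{Rem41}}; the real work is the elliptic boundary value theory---solvability and regularity of the Neumann problem underlying completeness of the Helmholtz splitting, and of the Dirichlet problem pinning down $\dim\HG(M)$---together with the finite-dimensionality of the harmonic-field spaces that makes the dimension count legitimate. The one genuinely delicate identification is $\HG(M) = \mcal{H}^1_\ST(M)$, i.e. that every tangential harmonic field is an exact gradient whose potential is locally constant on $\pd M$; handling it without recourse to the dimension-matching shortcut requires the relative-cohomology description of $\mcal{H}^1_\ST(M)$ from \cite{b_GSch1}.
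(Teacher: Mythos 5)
There is no internal proof to compare against: the paper imports this theorem verbatim from \cite{j_JCan1} (Cantarella--DeTurck--Gluck) and states it without proof, using it only to justify the topological reading of the harmonic boundary variables. Judged on its own merits, your sketch is essentially sound, and it takes a genuinely different route from the cited source: you derive the result from the paper's own Hodge-theoretic toolkit (\textit{Theorem~\ref{Green}}, \textit{Definition~\ref{DNfields}}, \textit{Theorem~\ref{HodgeIso}}, \textit{Corollary~\ref{HodgeIsoCor1}}) after passing to $1$-forms, whereas \cite{j_JCan1} argues directly in vector-calculus terms with the relative homology of domains in $3$-space. Your dictionary is correct ($\HK(M) = \mcal{H}^1_\SN(M)$, $\HG(M) \subseteq \mcal{H}^1_\ST(M)$), the Green-formula orthogonality plus the Neumann-problem completeness argument for $\VF(M) = \VK(M) \oplus \VG(M)$ is the standard Helmholtz argument, and the chain $\HK(M) \cong \HDR^1(M,d) \cong (H_1(M))^\ast$ settles the first dimension identity; what this approach buys is that the theorem becomes a corollary of the machinery already assembled in Section~\ref{Sec3}, at the price of the elliptic solvability results you acknowledge. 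One caveat deserves emphasis: your closing dimension match is not a removable shortcut, and the fact it leans on is not free. The identity $\dim H_2(M) = $ (number of components of $\pd M$) $-$ (number of components of $M$), which you quote from the Section~\ref{Sec5} bullet list, is an unproved assertion of the paper that is specific to compact domains embedded in $\mbb{R}^3$; it fails for abstract compact $3$-manifolds with boundary (for $M = T^3 \setminus B^3$ one has $\dim H_2(M) = 3$ with a single boundary sphere, and correspondingly $\HG(M) = 0 \neq \mcal{H}^1_\ST(M)$, so the identification $\HG(M) = \mcal{H}^1_\ST(M)$ also breaks). This step is therefore exactly where the hypothesis ``compact domain in three-dimensional space'' enters, and a self-contained proof must supply it, e.g.\ by Alexander duality or a Mayer--Vietoris argument for $S^3 = M \cup (S^3 \setminus \mathrm{int}\, M)$. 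Granting that topological fact, your argument closes correctly; note in addition that for the equality $\dim H_2(M) = \dim \HG(M)$ itself you never need $\HG(M) = \mcal{H}^1_\ST(M)$, since your Dirichlet-problem count already shows both sides equal the same number of boundary components minus components of $M$.
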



\begin{example}
The homology group of the two-dimensional sphere $M = S^2$ (see the left of Fig.\ref{fig1}) consists of
$H_2 \cong \mbb{R}$, $H_1 \cong 0$, and $H_0 \cong \mbb{R}$.
There is no $\GVS \in \HK(M)$ in $S^2$, because $\dim H_1(M) = \dim \HK(M) = 0$.
However, there exists $\GVS \in \HG(M)$ on $S^2$, because $\dim H_2(M) = \dim \HG(M) = 1$.
Then, $\HG(M)$ means a radiational vector field flowing from an internal point of the sphere.

The homology group of the two-dimensional torus $M = T^2$  (see the right of Fig.\ref{fig1}) consists of $H_2 \cong \mbb{R}$, $H_1 \cong \mbb{R} \varoplus \mbb{R}$, and $H_0 \cong \mbb{R}$.
Thus, $\dim \HK(M) = 2$, and $\dim \HG(M) = 1$.
Then, $\HK(M)$ means circulative vector fields around loops that are non-contractible.
The difference between the above two cases is $H_1$.

Let us consider $M = T^2$ as a $\pd$-manfold.
According to the Lefschetz duality~\cite[pp. 105]{b_GSch1}, $H^k(\Z, \delta) \cong H_{n-k}(\Z)$.
Therefore, $H_{n-k}(\Z) \cong \mcal{H}^k_\ST(M)$.
The degree of Stokes-Dirac structures on these two dimensional domain may be $(p-1,\,p,\,q-1,\,q) \in \{(0,\, 1,\, 2,\, 3), (1,\, 2,\, 1,\, 2), (2,\, 3,\, 0,\, 1)\}$.
Hence, in the case of $p = 2$ and $q = 2$, the extra terms in (\ref{GPintA2}) and (\ref{GPintB2}) corresponding $\HK(M)$ of $H_1$ appear.

\begin{figure}[h]
 \mbox{
 \begin{minipage}[b][30mm][c]{0.95\hsize}
  \centering
  \reflectbox{\includegraphics[width=0.5\columnwidth,clip,angle=0,origin=c]{./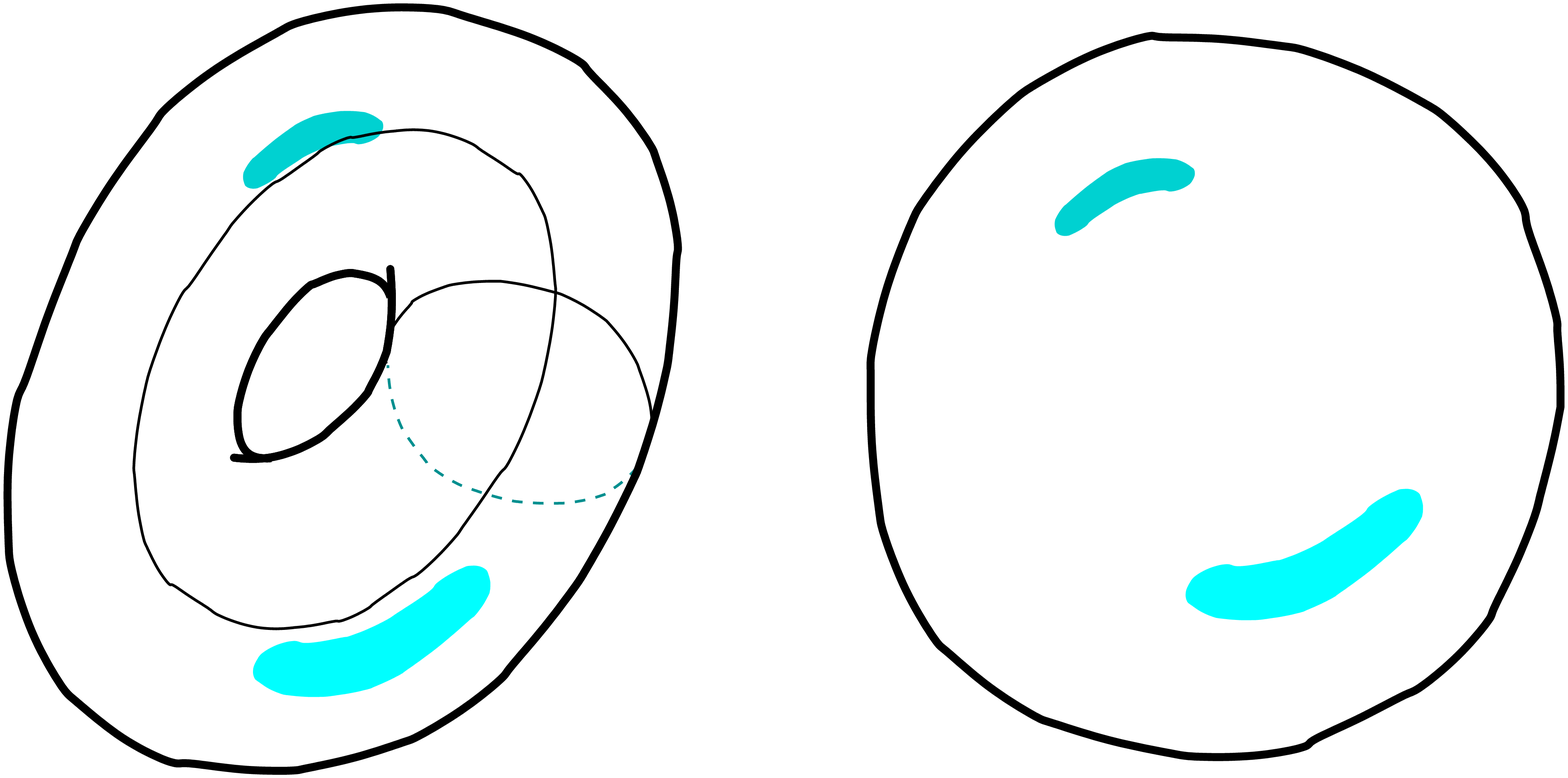}}
 \end{minipage}
 }
  \caption{Two-dimensional sphere and torus}
  \label{fig1}
\end{figure}
\end{example}


\memo{
Harmonic boundary energy flows can be used for interconnections or decompositions of Stokes-Dirac structures defined on disjoint domains without contradictions with respect to boundary conditions in the sense of the topology of manifolds.


Example\\

\begin{figure}[hb]
\vs{15}
\begin{minipage}[b][30mm][b]{1.0\linewidth}
\mbox{
\mbox{
\begin{minipage}[b][28mm][b]{0.45\linewidth}
 \centering
 \includegraphics[scale=0.15]{./fig/s.eps}
\end{minipage}
} \mbox{
\begin{minipage}[b][28mm][b]{0.45\linewidth}
 \centering
 \includegraphics[scale=0.15]{./fig/t.eps}
\end{minipage}
}
\hs{-89}
\scalebox{0.7}{
\mbox{
\begin{minipage}[b][28mm][b]{0.45\linewidth}
 $\begin{array}{@{\hs{5}}cccccccc}
 & & & & & & & \\[-38mm]
 & & & & & \hs{3} v_1 & & \\[8mm]
 & \hs{5}v_2\hs{-5} & & & & & & \\[0mm]
 & & & & & & & \hs{1} v_3 \\[8mm]
 & & & & & \hs{-2} v_4 & & \\
 & & & & & & & 
 \end{array}$
\end{minipage}
}\hs{24}
\mbox{
\begin{minipage}[b][28mm][b]{0.45\linewidth}
 $\begin{array}{@{\hs{3}}ccc@{\hs{-4}}cc@{\hs{4}}c@{\hs{1}}c@{\hs{4}}c}
 & & & & & & & \\[-40mm]
 & & & & & v_6 & & \\[2mm]
 & & & & & e_{10} & & \\[-1mm]
 & & \hs{1} e_{11} \hs{-1} & & & & & \\[1.5mm]
 & & & & & & e_{12} & \\[0.5mm]
 e_7 & & & & & & & \hs{-2} e_9 \hs{2} \\[-1mm]
 & & & & \hs{-3} e_8 \hs{3} & & & \\[2mm]
 & & & & v_5 & & & 
 \end{array}$
\end{minipage}
}
}
}
\vs{1}
\begin{center}
\hs{-4}
{\scriptsize Fig. A--2:\ $\mbb{S}^2$}\hs{30}
{\scriptsize Fig. A--3:\ $\mbb{T}^2$}
\end{center}
\end{minipage}
\end{figure}
%

Example ($2$-dimensional spherical surface)\\

 $\mbb{S}^2$ can be defined by $E_1 = \pd_2 A_1$ and $V_1 = \pd_1 E_1$, where $V_1 = \left[ v_1, v_2, v_3, v_4\right]^\top$, $E_1 = \left[ e_{1}, e_{2}, e_{3}, e_{4}, e_{5}, e_{6}\right]^\top$, $A_1 = \left[ a_{1}, a_{2}, a_{3}, a_{4}\right]^\top$, 
\begin{align}
 \pd_2 = {\scriptsize \left[
 \begin{array}{@{\hs{-0.2}}c@{\hs{-0.2}}c@{\hs{-0.2}}c@{\hs{-0.2}}c@{\hs{-0.2}}c@{\hs{-0.2}}c@{\hs{-0.2}}}
  1 & -1 & 0 & 1 & 0 & 0 \\
  0 & 1 & -1 & 0 & 0 & 1 \\
  -1 & 0 & 1 & 0 & -1 & 0 \\
  0 & 0 & 0 & -1 & 1 & -1
  \end{array} \right]^\top}, 
 \pd_1 = {\scriptsize \left[
 \begin{array}{@{\hs{-0.2}}c@{\hs{-0.2}}c@{\hs{-0.2}}c@{\hs{-0.2}}c@{\hs{-0.2}}c@{\hs{-0.2}}c@{\hs{-0.2}}}
  -1 & -1& -1 & 0 & 0 & 0 \\
  1 & 0 & 0 & -1 & -1 & 0 \\
  0 & 1 & 0 & 1 & 0 & -1 \\
  0 & 0 & 1 & 0 & 1 & 1 
  \end{array} \right]}. \nt
\end{align}
 Then we have $\dim H_2 = \dim A_1 - \rank \pd_2 = 4 - 3 = 1$, $\dim H_1 = \dim E_1 - \rank \pd_2 - \rank \pd_1 = 6 - 3 - 3 = 0$, $\dim H_0 = \dim V_1 - \rank \pd_1 = 4 - 3 = 1$.
 Hence, $\mbb{S}^2 \not \simeq \mbb{R}^2$.
 However, $\mbb{S}^2 \setminus \mbb{R}^0 \simeq \mbb{R}^2$.


Example ($2$-dimensional torus $\mbb{T}^2$)\\
 
 $\mbb{T}^2$ can be defined by $E_2 = \pd_2 A_2$ and $V_2 = \pd_1 E_2$, where $V_2 = \left[ v_5, v_6\right]^\top$, $E_2 = \left[ e_{7}, e_{8}, e_{9}, e_{10}, e_{11}, e_{12}\right]^\top$, $A_2 = \left[ a_{5}, a_{6}, a_{7}, a_{8}\right]^\top$, 
\begin{align}
 \pd_2 = {\scriptsize \left[
 \begin{array}{@{\hs{-0.2}}c@{\hs{-0.2}}c@{\hs{-0.2}}c@{\hs{-0.2}}c@{\hs{-0.2}}c@{\hs{-0.2}}c@{\hs{-0.2}}}a
  1 & 0 & 0 & 1 & 1 & 0 \\
  -1 & -1 & 0 & 0 & -1 & 0 \\
  0 & 1 & 1 & 0 & 0 & 1 \\
  0 & 0 & -1 & -1 & 0 & -1
  \end{array} \right]^\top}, 
 \pd_1 =  {\scriptsize \left[
 \begin{array}{@{\hs{-0.2}}c@{\hs{-0.2}}c@{\hs{-0.2}}c@{\hs{-0.2}}c@{\hs{-0.2}}c@{\hs{-0.2}}c@{\hs{-0.2}}}
  -1 & \hs{0.5} 0 \hs{0.5} & 1 & \hs{0.5} 0 \hs{0.5} & 1 & -1 \\
  1 & 0 & -1 & 0 & -1 & 1
  \end{array} \right]}. \nt
\end{align}
Then we have $\dim H_2 = \dim A_2 - \rank \pd_2 = 4 - 3 = 1$, $\dim H_1 = \dim E_2 - \rank \pd_2 - \rank \pd_1 = 6 - 3 - 1 = 2$, $\dim H_0 = \dim V_2 - \rank \pd_1 = 2 - 1 = 1$.
 Hence, $\mbb{T}^2 \not \simeq \mbb{R}^2$.
 However, $\mbb{T}^2 \setminus (\mbb{S}^1 \vee \mbb{S}^1) \simeq \mbb{R}^2$, where $\vee$ is the sum at a point.

} 


\section*{Appendix}
\setcounter{subsection}{0}

\subsection{Vector fields on boundary}\label{VFonB}

A metric on a $\pd$-manifold is defined as a smooth map $g\colon TM \times TM \to \mbb{R}$ such that $g|_p\colon T_pM \times T_pM \to \mbb{R}$ is symmetric, bilinear and positive definite for all $p \in M$.
Let $(M,g)$ be a Riemannian $\pd$-manifold.
Consider a vector field $E_i \in \Gamma(TU)$ such that $g(E_i, E_j)|_{p} = \delta_{ij}$ for any $p \in U \subset M$, where $1 \leq i,j \leq n-1$ and the space of all smooth vector field is denoted by $\Gamma$. The tuple $(E_1, \cdots, E_n)$ is called a \emph{$g$-orthonormal frame}.

According to the collar theorem~\cite{b_GSch1}, a normal vector field $\mcal{N}\colon \pd M \to TM|_{\pd M}$ defined on $\pd M$ can be smoothly extended to a vector field $\widehat{\mcal{N}}$ on a neighborhood $U$ of the boundary $\pd M$.
Then, by choosing $\widetilde{\mcal{N}} = \widehat{\mcal{N}}/|\widehat{\mcal{N}}|$, 
any vector field $X \in \Gamma(TM)$ in $U$ can be uniquely decomposed into its tangential component $X^\parallel$ and its normal component $X^\perp$, i.e., $X = X^\parallel + X^\perp$, where $X^\parallel = g(X, \widetilde{\mcal{N}}) \widetilde{\mcal{N}}$, and  $g(X^\perp, \widetilde{\mcal{N}}) = 0$.
This construction can determine a $g$-orthonormal frame $\mcal{F} = (\widetilde{\mcal{N}}, E_1, \cdots, E_{n-1})$ on any sufficiently small neighborhood $U$ intersecting $\pd M$, where $\widetilde{\mcal{N}}|_{\pd M} = \mcal{N}$ and $E_i|_{\pd M} \in T \pd M$.


\subsection{Sobolev space of differential forms}\label{Sobolev}


\memo{

\begin{definition}
もし，$\langle\,,\, \rangle_{\mbb{F}}$が滑らかに$p \in M$に依存するならば，ファイバー計量は，正定，対称，双線形な写像$\langle\,,\, \rangle_{\mbb{F}}|_p \colon \pi^{-1}(p) \times \pi^{-1}(p) \to \mbb{R}$となる．
与えられたファイバー計量を用いて，その束アトラスから，任意の$p \in U_a$と各$U_a$に対して，$\langle \nu_i, \nu_j \rangle_{\mbb{F}}|_p = \delta_{ij}$となるような，$\Gamma(\mbb{F}|_{U_a})$の\textbf{局所直行枠}$(\nu_1, \cdots, \nu_m)$が得られる．ここで，$1 \leq i,j \leq m$である．
\end{definition}

}


\begin{definition}
 Let $M$ be a $\pd$-manifold. Consider a vector bundle $\mbb{F}$ over $M$ with a fibre metric $\langle \,, \, \rangle_{\mbb{F}}$ and a connection $\nabla$. 
 Let $\{ U_a \}_{a \in A}$ be an open cover of $M$, $\{ \rho_a \}_{a \in A}$ be a subordinated partition unity, and $(E^a_1, \cdots, E^a_n)$ be a family of local frames.
 Then , \emph{$W^{s,p}$-norm on $\Gamma(\mbb{F})$} is defined by
 \begin{align}
  \| \sigma \|^p_{W^{s,p}} &= \sum_{a \in A} \int_M \rho_a | \sigma |^p_{J^s(\mbb{F}|U_a)} \mu, \label{normW}\\
  \| \sigma \|^p_{W^{1,p}} &= \int_M | \sigma |^p_{J^1(\mbb{F})} \mu,
 \end{align}
 where $1 \leq p < \infty$, $s \in \mbb{N}_0$, $\mu \in \varOmega^n(M)$ is the Riemannian volume form on $M$, $| \cdot |^p_{J^s(\mbb{F}|U_a)}\colon \Gamma(\mbb{F}_U) \to C^\infty(U)$ is the fiber norm such that
 \begin{align}
  | \sigma |^2_{J^0(\mbb{F})} &= \langle \sigma, \sigma \rangle_{\mbb{F}}, \nt \\
  | \sigma |^2_{J^s(\mbb{F})} &= | \sigma |^2_{J^{s-1}(\mbb{F})} + \sum_{1 \leq j \leq n} | \nabla_{E_j} \sigma |^2_{J^{s-1}(\mbb{F})},
 \end{align}
 and the map $\nabla_X = \nabla(X, \cdot)$ for a fixed $X \in \Gamma(TM)$ that is called \emph{the covariant derivative in the direction $X$} is induced from the connection $\nabla$. 
\end{definition}

 The space of smooth compactly supported sections is denoted by $\Gamma_c(\mbb{F})$.
 \emph{The Sobolev space} $W^{s,p}\Gamma(\mbb{F})$ is defined as the completion of $\Gamma_c(\mbb{F})$ with respect to the norm (\ref{normW}).
If $M$ is compact, then $\Gamma_c(\mbb{F}) = \Gamma(\mbb{F})$.


Consider the exterior $k$-form bundle $\varLambda^k(M)$ over a Riemannian $\pd$-manifold $M$ as $\mbb{F}$ in the above definition.
In this case, the space $\Gamma_c(\varLambda^k(M))$ of smooth compactly supported sections of $\varLambda^k(M)$ is that of compactly supported differential forms on $M$.
 This space is equipped with a $L^2$-inner product $\left<\left< \omega, \eta \right>\right> = \int_M \omega \wedge \Hodge \eta$, and the corresponding fiber metric on $\varLambda^k(M)$ is 
 $\langle \omega, \eta \rangle_{\varLambda^k} \mu_M = \omega \wedge \Hodge \eta$.

\section*{Acknowledgement}


The authors would like to thank Professor A. J. van der Schaft for fruitful discussions.

\small

\normalsize

\end{document}